\numberwithin{equation}{section}
\theoremstyle{plain}
\newtheorem{theorem}{Theorem}[section]
\newtheorem{lemma}[theorem]{Lemma}
\newtheorem{proposition}[theorem]{Proposition}
\theoremstyle{definition}
\newtheorem{definition}[theorem]{Definition}
\newtheorem{remark}[theorem]{Remark}
\newcommand{\E}{\mathbb{E}}
\newcommand{\R}{\mathbb{R}}
\newcommand{\RR}{\mathbb{R}}
\newcommand{\ot}{[0,t]}
\newcommand{\ott}{[0,T]}
\newcommand{\1}{{\bf 1}}
\newcommand{\be}{\mathbf{E}}
\newcommand{\bp}{\mathbf{P}}
\newcommand{\cf}{\mathcal F}
\newcommand{\cv}{\mathcal V}
\newcommand{\HH}{\mathfrak H}
\newcommand{\al}{\alpha}
\newcommand{\ep}{\varepsilon}
\newcommand{\gga}{\Gamma}
\newcommand{\ka}{\kappa}
\newcommand{\la}{\lambda}
\newcommand{\laa}{\Lambda}
\newcommand{\om}{\omega}
\newcommand{\oom}{\Omega}
\newcommand{\si}{\sigma}
\newcommand{\vp}{\varphi}
\newcommand{\lp}{\left(}
\newcommand{\rp}{\right)}
\newcommand{\lc}{\left[}
\newcommand{\rc}{\right]}
\newcommand{\lcl}{\left\{}
\newcommand{\rcl}{\right\}}
\theoremstyle{remark}
\let\Section=\section
\def\section{\setcounter{equation}{0}\Section}
\def\RR{\mathbb{R}}
\def\la{{\lambda}}
\def\si{{\sigma}}
\def \eref#1{\hbox{(\ref{#1})}}
\title{Parabolic Anderson model with rough dependence in space
 }
\author{Yaozhong Hu\thanks{Y. Hu is partially supported by a grant from the Simons Foundation $\#$209206.}, Jingyu Huang, Khoa L\^e, David Nualart\thanks{ D. Nualart is supported by the NSF grant DMS1208625 and the ARO grant FED0070445.}, Samy Tindel\thanks{S. Tindel is supported by the NSF grant  DMS1613163}
\date{\vspace{0em}\small 
  }
}
\begin{document}
\maketitle
\begin{abstract}
This paper studies the one-dimensional parabolic Anderson model driven by a Gaussian noise which is white in time and has the covariance of a fractional Brownian motion with Hurst parameter $H \in (\frac{1}{4}, \frac{1}{2})$ in the space variable. We derive the Wiener chaos expansion of the solution and a Feynman-Kac formula for the moments of the solution. These results allow us to establish sharp lower and upper asymptotic bounds for the $n$th moment of the solution. 

	\noindent{\it Keywords. Stochastic heat equation, fractional Brownian motion, Feynman-Kac formula, Wiener chaos expansion, 
	 sharp lower and upper moment bounds,  intermittency} 

	\noindent{\it \noindent AMS 2010 subject classification.}
	Primary 60H15; Secondary 35R60, 60G60.
\end{abstract}

\setlength{\parindent}{1.5em}



\section{Introduction}
 A recent paper \cite{HHLNT} studies the stochastic heat equation for $(t,x) \in (0,\infty)\times \RR$
\begin{equation}\label{eq:SHE sigma}
\frac{\partial u}{\partial t}=\frac{\kappa}{2}\frac{\partial ^2 u}{\partial x
^2}+\sigma(u)\,\dot W\,,
\end{equation}
where $\dot{W}$ is a centered Gaussian noise which is white in time and behaves as fractional Brownian motion with Hurst parameter $1/4 < H < 1/2$ in space, and $\sigma$ may be a nonlinear function with some smoothness.

However, the specific case $\sigma(u)=u$, i.e.
\begin{equation}\label{spde}
\frac{\partial u}{\partial t}=\frac{\kappa}{2}\frac{\partial ^2 u}{\partial x
^2}+u\,\dot W
\end{equation}
deserves some specific treatment due to its simplicity. Indeed, this linear equation turns out to be a continuous version of the parabolic Anderson model, and is related to challenging systems in random environment like KPZ equation \cite{Ha,BeC} or polymers~\cite{AKQ,BTV}. The localization and intermittency properties \eqref{spde} have thus been thoroughly studied for equations driven by a space-time white noise (see \cite{Kh} for a nice survey), while a recent trend consists in extending this kind of result to equations driven by very general Gaussian noises \cite{Ch14,HHNT,HN,HNS}, but the rough noise $\dot W$ presented here is not covered by the aforementioned references. 

To fill this gap, we first go to the existence and uniqueness problem. Although the existence and uniqueness of the solution in the general
nonlinear  case \eqref{eq:SHE sigma} has been established in \cite{HHLNT}, in this linear case \eqref{spde}, one can implement a rather simple procedure involving Fourier transforms.  
Since this point of view is interesting in its own right and is short enough, we develop it in Subsection \ref{sec:picard}. 
In Subsection \ref{subsec: chaos}, we study the random field solution using chaos expansion. Following the approach introduced in \cite{HN,HHNT},  we obtain an explicit formula for the kernels of the Wiener chaos expansion and we show its convergence, and thus obtain the existence and uniqueness of the solution. It worths noting these methods treat different classes of initial data which are more general than in \cite{HHLNT} and different from \cite{BJQ}.

We then move to a Feynman-Kac type representation for the moments of the solution.  In fact, we  cannot  expect  a Feynman-Kac formula for the solution, because the  covariance is rougher than the space-time white noise case, and this type of formula requires smoother covariance structures  (see, for instance,  \cite{HNS}). However,  by means of Fourier analysis techniques as in \cite{HN,HHNT}, we are able to obtain a Feynman-Kac formula for the moments that involves a fractional derivative of the Brownian local time.

Finally, the previous considerations allow us to handle, in the last section of the paper,  the intermittency properties of the solution.   More precisely,  we show  sharp lower bounds for the moments of the solution of the form $\be [u(t,x)^n]\ge\exp(C n^{1+\frac 1H} t)$, for all $t\ge 0$, $x\in \R $ and $n\ge 2$,  where $C$ is independent of 
$t\ge 0$, $x\in \R $ and $n$.   These bounds entail the intermittency phenomenon and match the corresponding estimates for the case $H>\frac 12$ obtained in \cite{HHNT}. 
After the completion of this work, three of the authors have studied the parabolic Anderson model in more details in \cite{HLN}. In particular, existence and uniqueness results  are extended for a wider class of initial data, exact long term asymptotics for the moments of the solution are obtained. 

\section{Preliminaries}

Let us start by introducing our basic notation on Fourier transforms
of functions. The space of   Schwartz functions is
denoted by $\mathcal{S}$. Its dual, the space of tempered distributions, is $\mathcal{S}'$.  The Fourier
transform of a function $u \in \mathcal{S}$ is defined with the normalization
\[ \mathcal{F}u ( \xi)  = \int_{\mathbb{R}} e^{- i
   \xi  x } u ( x) d x, \]
so that the inverse Fourier transform is given by $\mathcal{F}^{- 1} u ( \xi)
= ( 2 \pi)^{- 1} \mathcal{F}u ( - \xi)$.

 Let $ \mathcal{D}((0,\infty)\times \R)$ denote the space  of real-valued infinitely differentiable functions with compact support on $(0, \infty) \times \R$.
Taking into account the spectral representation of the covariance function of the fractional Brownian motion in the case $H<\frac 12$
proved in \cite[Theorem 3.1]{PT}, we represent  our noise $W$   by a zero-mean Gaussian family $\{W(\vp) ,\, \vp\in
\mathcal{D}((0,\infty)\times \R)\}$ defined on a complete probability space
$(\Omega,\cf,\bp)$, whose covariance structure
is given by
\begin{equation}\label{eq:cov1}
\be\lc W(\vp) \, W(\psi) \rc
=  c_{1,H}\int_{\R_{+}\times\R}
\cf\varphi(s,\xi) \, \overline{\cf\psi(s,\xi)} \, |\xi|^{1-2H} \, ds  d\xi,
\end{equation}
where the Fourier transforms $\cf\varphi,\cf\psi$ are understood as Fourier transforms in space only and
\begin{equation}\label{eq:expr-c1H}
c_{1,H}= \frac 1 {2\pi} \Gamma(2H+1)\sin(\pi H)  \,.
\end{equation}

We 
denote by $\HH $ the  Hilbert space obtained by  completion of $ \mathcal{D}((0,\infty)\times \R)$ with respect to the inner product
\begin{equation}\label{eq: H_0 element H prod}
  \langle\varphi, \psi \rangle_{ \HH}=c_{1, H}\int_{\RR_+\times \RR}\mathcal{F}\varphi(s,\xi)\overline{\mathcal{F}\psi(s,\xi)}|\xi|^{1-2H }d\xi ds\,.
  \end{equation}     
  The next proposition is from Theorem 3.1 and Proposition 3.4 in \cite{PT}. 
  \begin{proposition} \label{prop: H} 
 If $\HH_0$ denotes the  class of functions   $\varphi  \in L^2( \RR_+\times \RR)$ such that 
 $$ \int_{\RR_+\times \RR} |\mathcal{F}\varphi(s,\xi)|^2|\xi|^{1-2H}d\xi ds < \infty\,, $$ then $ \HH_0$ is not complete  and the inclusion
  $\HH_0 \subset \HH$ is strict. 
\end{proposition}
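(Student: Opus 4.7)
The plan is to establish both non-completeness of $\HH_0$ and the strict inclusion $\HH_0 \subsetneq \HH$ simultaneously, by exhibiting an element of $\HH$ whose ``Fourier side'' identification fails to lie in $L^2(\RR_+\times \RR)$, and using an approximating sequence as the Cauchy sequence witnessing incompleteness. The key structural observation is that, since $H<\frac12$, the weight $|\xi|^{1-2H}$ vanishes at $\xi=0$. Thus the $\HH$-norm is strictly weaker than the $L^2(\RR_+\times\RR)$ norm near the origin, which is where we will produce the mismatch.

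Concretely, I would first identify $\HH$ (via Plancherel applied to elements of $\calD((0,\infty)\times\RR)$) isometrically with the weighted space $L^2(\RR_+\times\RR;\,|\xi|^{1-2H}\,ds\,d\xi)$; under this identification $\HH_0$ corresponds to functions in the weighted space that are additionally in unweighted $L^2$. The density needed for this isometric identification follows since Schwartz functions approximate any weighted $L^2$ function and have Fourier transforms (in space) belonging to the Schwartz class. The problem then reduces to producing a function in $L^2(|\xi|^{1-2H}ds\,d\xi)$ which is not in $L^2(ds\,d\xi)$.

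For the explicit construction I take $g(\xi)=|\xi|^{-1/2}\mathbf{1}_{\{|\xi|\le 1\}}$ and any nontrivial compactly supported $f\in L^2(\RR_+)$, and set $G(s,\xi)=f(s)g(\xi)$. Then
\[
\int_{\RR_+\times\RR}|G(s,\xi)|^2|\xi|^{1-2H}\,ds\,d\xi = \|f\|_{L^2}^2\int_{|\xi|\le 1}|\xi|^{-2H}\,d\xi<\infty
\]
because $-2H>-1$, while
\[
\int_{\RR_+\times\RR}|G(s,\xi)|^2\,ds\,d\xi = \|f\|_{L^2}^2\int_{|\xi|\le 1}|\xi|^{-1}\,d\xi=\infty.
\]
A standard truncation-and-mollification procedure produces Schwartz functions $h_n$ (compactly supported in $s$ and away from $\xi=0$) with $h_n\to G$ in the weighted $L^2$ norm, by dominated convergence. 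Taking $\varphi_n$ to be the inverse spatial Fourier transform of $h_n$ yields a sequence in $\HH_0$ (indeed in the Schwartz class) that is Cauchy in $\HH$.

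Finally, to rule out a limit in $\HH_0$, suppose $\varphi_n\to\psi$ in $\HH$ with $\psi\in\HH_0$. Then $\cf\psi$ lies in $L^2(\RR_+\times\RR)$, while the isometric identification forces $\cf\psi=G$ in $L^2(|\xi|^{1-2H}ds\,d\xi)$, hence Lebesgue almost everywhere since the weight is positive off a null set. This contradicts $G\notin L^2(\RR_+\times\RR)$. The same sequence $\{\varphi_n\}\subset\HH_0$ is therefore Cauchy in the $\HH$-norm without a limit in $\HH_0$, proving non-completeness; its $\HH$-limit simultaneously provides the element of $\HH\setminus\HH_0$. The only mildly delicate point is verifying the Schwartz approximation of $G$ in the weighted norm, but this is routine given the compact support of $f$ and $g$ and the local integrability of $|\xi|^{-2H}$ at the origin.
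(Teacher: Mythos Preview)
The paper does not give its own proof of this proposition; it simply attributes the result to Theorem~3.1 and Proposition~3.4 of Pipiras--Taqqu \cite{PT} (stated there in the purely spatial setting, which tensors with $L^2(\RR_+)$ to yield the space-time version here). Your argument is correct and follows the same route as the standard one: since $H<\tfrac12$ the weight $|\xi|^{1-2H}$ vanishes at the origin, so one manufactures a low-frequency singularity on the Fourier side that sits in the weighted $L^2$ but not in unweighted $L^2$. The explicit choice $g(\xi)=|\xi|^{-1/2}\mathbf 1_{\{|\xi|\le1\}}$ is precisely the right object, and your truncation/contradiction scheme for ruling out a limit in $\HH_0$ is clean.

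The one point that is only gestured at, and that carries all the technical weight, is the identification of the abstract completion $\HH$ with $L^2(\RR_+\times\RR;|\xi|^{1-2H}ds\,d\xi)$ and, relatedly, the inclusion $\HH_0\subset\HH$ together with the consistency of the $\HH$-norm on $\HH_0$ with the weighted Fourier formula. All of this reduces to the density of $\cf(\calD((0,\infty)\times\RR))$ in the weighted space. Note that your $\varphi_n$, being inverse Fourier transforms of compactly supported $h_n$, are Schwartz in $x$ but not compactly supported, hence not in $\calD$; so you are implicitly using $\HH_0\subset\HH$ already at that step. This density is indeed routine (approximate first by tensors $f(s)h(\xi)$ with $f\in C_c^\infty$, then approximate $\cf^{-1}h$ in $x$ by compactly supported smooth functions while controlling the weighted norm), but it is worth stating explicitly since it is the hinge on which both conclusions rest.
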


We recall   that the Gaussian family $W$ can be extended to $\HH$ and this produces an isonormal Gaussian process, {
for which Malliavin calculus can be applied. }
We refer to~\cite{Nua}   and  \cite{hubook}  
for a detailed account of the Malliavin calculus with respect to a
Gaussian process. On our Gaussian space, the  smooth and cylindrical
random variables $F$ are of the form
\begin{equation*}
F=f(W(\phi_1),\dots,W(\phi_n))\,,
\end{equation*}
with $\phi_i \in \HH$, $f \in C^{\infty}_p (\R^n)$ (namely $f$ and all
its partial derivatives have polynomial growth). For this kind of random variable, the derivative operator $D$ in the sense of Malliavin calculus is the
$\HH$-valued random variable defined by
\begin{equation*}
DF=\sum_{j=1}^n\frac{\partial f}{\partial
x_j}(W(\phi_1),\dots,W(\phi_n))\phi_j\,.
\end{equation*}
The operator $D$ is closable from $L^2(\Omega)$ into $L^2(\Omega;
\HH)$  and we define the Sobolev space $\mathbb{D}^{1,2}$ as
the closure of the space of smooth and cylindrical random variables
under the norm
\[
\|DF\|_{1,2}=\sqrt{\be [F^2]+\be [\|DF\|^2_{\HH}  ]}\,.
\]
We denote by $\delta$ the adjoint of the derivative operator (called divergence operator) given
by the duality formula
\begin{equation}\label{dual}
\be  \lc \delta (u)F \rc =\be  \lc \langle DF,u
\rangle_{\HH}\rc ,
\end{equation}
for any $F \in \mathbb{D}^{1,2}$ and any element $u \in L^2(\Omega;
\HH)$ in the domain of $\delta$.

For any integer $n\ge 0$ we denote by $\mathbf{H}_n$ the $n$th Wiener chaos of $W$. We recall that $\mathbf{H}_0$ is simply  $\R$ and for $n\ge 1$, $\mathbf {H}_n$ is the closed linear subspace of $L^2(\Omega)$ generated by the random variables $\{ H_n(W(\phi)),\phi \in \HH, \|\phi\|_{\HH}=1 \}$, where $H_n$ is the $n$th Hermite polynomial.
For any $n\ge 1$, we denote by $\HH^{\otimes n}$ (resp. $\HH^{\odot n}$) the $n$th tensor product (resp. the $n$th  symmetric tensor product) of $\HH$. Then, the mapping $I_n(\phi^{\otimes n})= H_n(W(\phi))$ can be extended to a linear isometry between    $\HH^{\odot n}$ (equipped with the modified norm $\sqrt{n!}\| \cdot\|_{\HH^{\otimes n}}$) and $\mathbf{H}_n$.

Consider now a random variable $F\in L^2(\Omega)$ which is measurable with respect to the $\sigma$-field  $\cf$ generated by $W$. This random variable can be expressed as
\begin{equation}\label{eq:chaos-dcp}
F= \be \lc F\rc + \sum_{n=1} ^\infty I_n(f_n),
\end{equation}
where the series converges in $L^2(\Omega)$, and the elements $f_n \in \HH ^{\odot n}$, $n\ge 1$, are determined by $F$.  This identity is called the Wiener chaos expansion of $F$.

The Skorohod integral (or divergence) of a random field $u$ can be
computed by  using the Wiener chaos expansion. More precisely,
suppose that $u=\{u(t,x) , (t,x) \in \R_+ \times\R\}$ is a random
field such that for each $(t,x)$, $u(t,x)$ is an
$\cf_t$-measurable and square-integrable random  variable, here $\mathcal{F}_t$ is the $\sigma$ algebra generated by $W$ up to time $t$.
Then, for each $(t,x)$ we have a Wiener chaos expansion of the form
\begin{equation}  \label{exp1}
u(t,x) = \be \lc u(t,x) \rc + \sum_{n=1}^\infty I_n (f_n(\cdot,t,x)).
\end{equation}
Suppose   that $\be [\|u\|_{ \HH}^{2}]$ is finite.
Then, we can interpret $u$ as a square-integrable
random function with values in $\HH$ and the kernels $f_n$
in the expansion (\ref{exp1}) are functions in $\HH
^{\otimes (n+1)}$ which are symmetric in the first $n$ variables. In
this situation, $u$ belongs to the domain of the divergence operator (that
is, $u$ is Skorohod integrable with respect to $W$) if and only if
the following series converges in $L^2(\Omega)$
\begin{equation}\label{eq:delta-u-chaos}
\delta(u)= \int_0 ^\infty \int_{\R^d}  u(t,x) \, \delta W(t,x)
= W(\be [u]) + \sum_{n=1}^\infty I_{n+1} (\widetilde{f}_n),
\end{equation}
where $\widetilde{f}_n$ denotes the symmetrization of $f_n$ in all its $n+1$ variables.
We note here that if $\Lambda_{H}$ denotes the space of predictable processes $g$ defined on $\RR_+\times \RR$ such that almost surely $g\in \HH$ and $\be [\|g\|^2_{\HH}] < \infty$, the Skorohod integral of $g$ with respect to $W$ coincides with the It\^o integral defined in \cite{HHLNT}, also, we have the isometry 
\begin{equation}\label{int isometry}
\be \left [\left( \int_{\RR_+} \int_{\RR} g(s,x) W(ds,dx) \right)^2 \right] = \E \|g\|^2_{\HH}\,.
\end{equation}
Now we are ready to state the definition of the solution to equation \eqref{spde}. 

\begin{definition}
Let $u=\{u(t,x), 0 \leq t \leq T, x \in \mathbb{R}\}$ be a real-valued predictable stochastic process  such that for all $t\in[0,T]$ and $x\in\R$ the process $\{p_{t-s}(x-y)u(s,y) \1_{[0,t]}(s), 0 \leq s \leq t, y \in \mathbb{R}\}$ is Skorohod integrable, where $p_t(x)$ is the heat kernel on the real line related to $\frac{\ka}{2}\Delta$. We say that $u$ is a mild solution of \eqref{spde} if for all $t \in [0,T]$ and $x\in \mathbb{R}$ we have
\begin{equation}\label{eq:mild-formulation sigma}
u(t,x)= p_t*u_0(x) + \int_0^t \int_{\mathbb{R}}p_{t-s}(x-y)u(s,y) W(ds,dy) \quad a.s.,
\end{equation}
where the stochastic integral is understood  in the sense of Skorohod or It\^o.
\end{definition}



\section{Existence and uniqueness}\label{sec:anderson-exist-uniq}

In this section we prove the existence and uniqueness result for the solution to equation \eqref{spde} by means of two different methods: 
one is via Fourier transform and the other is via chaos expansion. 

\subsection{Existence and uniqueness via Fourier transform}\label{sec:picard}

In this subsection we discuss the existence and uniqueness of equation (\ref{spde}) using techniques of Fourier analysis.


Let    $\dot{H}^{\frac 12-H}_0$  be the set of functions $f\in L^2(\RR)$ such that $\int_\RR | \cf f(\xi)| ^2 |\xi|^{1-2H} d\xi <\infty$.
   This spaces is the time independent analogue to the space $\HH_0$ introduced in Proposition
\ref{prop: H}.  We know that 
$\dot{H}^{\frac 12-H}_0$ is not complete with the seminorm  $ \left[ \int_\RR | \cf f(\xi)| ^2 |\xi|^{1-2H} d\xi \right] ^\frac 12$ (see \cite{PT}). However,  it is not difficult to check that the space   $\dot{H}^{\frac 12-H}_0$ is complete for the seminorm $\|f\|_{\cv(H)} ^2:=\int_\RR | \cf f(\xi)| ^2  (1+|\xi|^{1-2H} )d\xi$.

 In the  next theorem  we show the existence and uniqueness result  assuming that
 the initial condition belongs to $\dot{H}^{\frac 12-H}_0$ and
 using estimates based on the Fourier transform in the space variable. To this purpose, we   introduce
 the space $\cv_T(H)$  as the completion of the set  of elementary   $\dot{H}^{\frac 12-H}_0$-valued stochastic processes $\{u(t,\cdot), t\in [0,T]\}$ with respect to the seminorm
\begin{equation}  \label{nuH}
\|u\|_{\cv_{T}(H)}^{2}:=\sup_{t\in [0,T]}   \be \| u(t,\cdot)\|_{\cv(H)}^{2}.
\end{equation}

We now state a convolution lemma.

\begin{proposition}\label{prop:convolution-fourier}
Consider a function $u_{0}\in \dot{H}^{\frac 12-H}_0$ and
$\frac{1}{4}<H<\frac{1}{2}$. For any  {$v\in\cv_{T}(H)$} we set
$\gga(v)=V$ in the following way:
\begin{equation*}
\gga(v):=V(t,x)=p_t*u_0(x) + \int_0^t \int_{\mathbb{R}}p_{t-s}(x-y) v(s,y) W(ds,dy),
\quad t\in[0,T], \, x\in\R.
\end{equation*}
Then $\gga$ is well-defined as a map from $\cv_{T}(H)$ to $\cv_{T}(H)$. Furthermore, there exist two positive constants $c_{1},c_{2}$ such that the following estimate holds true on $[0,T]$:
\begin{equation}\label{eq:intg-bnd-V-lin}
{ \|V(t,\cdot)\|_{\cv(H)}^{2} \le c_{1} \, \|u_0\|_{\cv(H)}^{2}
+c_{2}\int_0^t  (t-s)^{2H-3/2}
 \|v(s,\cdot)\|_{\cv(H)}^{2} \,
ds\,.}
\end{equation}
\end{proposition}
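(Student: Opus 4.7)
My plan is to split $V = V_1 + V_2$ into the deterministic contribution $V_1(t,x) = (p_t \ast u_0)(x)$ and the stochastic convolution $V_2(t,x) = \int_0^t\int_\R p_{t-s}(x-y)\, v(s,y)\, W(ds,dy)$, and to estimate each piece in the Fourier-weighted norm $\|f\|_{\cv(H)}^2 = \int_\R |\cf f(\xi)|^2(1+|\xi|^{1-2H})\,d\xi$ via Plancherel. Since the Fourier transform in space of the heat semigroup gives $\cf V_1(t,\xi) = e^{-\ka t\xi^2/2}\, \cf u_0(\xi)$, the trivial estimate $e^{-\ka t\xi^2}\le 1$ yields $\|V_1(t,\cdot)\|_{\cv(H)}^2 \le \|u_0\|_{\cv(H)}^2$, which accounts for the $c_1\|u_0\|_{\cv(H)}^2$ contribution in \eqref{eq:intg-bnd-V-lin}.

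For $V_2$, I would apply a stochastic Fubini together with $\cf p_{t-s}(\xi) = e^{-\ka(t-s)\xi^2/2}$ (decomposing the resulting complex modulator $e^{-i\xi y}$ into cosine and sine so as to stay inside the real-valued Skorohod framework) to write
\[
\cf V_2(t,\xi) \;=\; \int_0^t\int_\R e^{-\ka(t-s)\xi^2/2}\, e^{-i\xi y}\, v(s,y)\, W(ds,dy).
\]
Applying the isometry \eqref{int isometry}, the identity $\cf(e^{-i\xi \cdot}\, v(s,\cdot))(\eta) = \cf v(s,\eta+\xi)$, and the $\eta\mapsto -\eta$ symmetry of $|\eta|^{1-2H}$, I obtain
\[
\E|\cf V_2(t,\xi)|^2 \;=\; c_{1,H}\int_0^t e^{-\ka(t-s)\xi^2}\!\int_\R \E|\cf v(s,\eta+\xi)|^2\, |\eta|^{1-2H}\,d\eta\,ds.
\]

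I will then integrate against $(1+|\xi|^{1-2H})\,d\xi$, substitute $\eta\mapsto\eta-\xi$, and swap the $\xi$ and $\eta$ integrals by Fubini, at which point the crux of the argument becomes the pointwise estimate
\[
\int_\R e^{-\ka r\xi^2}(1+|\xi|^{1-2H})\,|\eta-\xi|^{1-2H}\,d\xi \;\le\; C_T\,(1+|\eta|^{1-2H})\, r^{2H-3/2}
\]
for $r = t-s \in (0,T]$. This will follow from the subadditivity $(a+b)^{1-2H}\le a^{1-2H}+b^{1-2H}$ (valid because $1-2H\in(0,1)$) combined with the elementary Gaussian moment identities $\int_\R e^{-\ka r\xi^2}|\xi|^\alpha d\xi = C_\alpha\, r^{-(\alpha+1)/2}$, which produce three powers $r^{-1/2},\, r^{H-1},\, r^{2H-3/2}$, all of which are controlled on $(0,T]$ by a $T$-dependent multiple of $r^{2H-3/2}$ (the most singular). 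The hypothesis $H>1/4$ enters only to make $(t-s)^{2H-3/2}$ integrable in $s$, which then shows $V\in\cv_T(H)$. The main technical obstacle is the rigorous justification of the stochastic Fubini and of the isometry \eqref{int isometry} applied to a complex-valued integrand; both reduce, after separating real and imaginary parts, to routine applications of the identities collected around \eqref{int isometry}.
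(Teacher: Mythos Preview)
Your proposal is correct and follows essentially the same route as the paper: compute $\cf V$ via stochastic Fubini, apply the isometry together with the shift identity $\cf(e^{-i\xi\cdot}v)(\eta)=\cf v(\eta+\xi)$, translate $\eta\mapsto\eta-\xi$, use the subadditivity $|\eta-\xi|^{1-2H}\le|\eta|^{1-2H}+|\xi|^{1-2H}$, and evaluate the resulting Gaussian moments in $\xi$ by scaling. The only cosmetic difference is that you treat the full weight $(1+|\xi|^{1-2H})$ in a single pass, whereas the paper splits the $\cv(H)$-norm into its $L^2$ and weighted components and bounds them separately (obtaining exponents $(t-s)^{H-1}$ and $(t-s)^{2H-3/2}$ respectively before taking the worse one).
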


\begin{proof}
Let $v$ be a process in $\cv_{T}(H)$ and set $V=\gga(v)$.  We
focus on the bound \eqref{eq:intg-bnd-V-lin} for $V$. 

Notice that the Fourier transform of $V$ can be computed easily.
 {Indeed, setting $v_0(t,x)=p_t*u_0(x)$ and }invoking a stochastic
version of Fubini's theorem, we get
\begin{equation*}
\mathcal{F}V(t,\xi)=\mathcal{F}v_0(t,\xi)
+\int_0^t\int_{\mathbb{R}} \lp \int_{\R} e^{i x \xi} \, p_{t-s}(x-y) \, dx \rp
v(s,y)W(ds,dy)\,.
\end{equation*}
According to the expression of $\cf p_{t}$, we obtain
\begin{eqnarray*}
\mathcal{F}V(t,\xi)=\mathcal{F}v_0(t,\xi)+\int_0^t\int_{\mathbb{R}}e^{-i\xi
y} e^{-\frac{\kappa}{2}(t-s)\xi^2}v(s,y)W(ds,dy)\,.
\end{eqnarray*}
We now evaluate the quantity
$\be[\int_{\mathbb{R}}|\mathcal{F}V(t,\xi)|^2|\xi|^{1-2H}d\xi ]$ in
the definition of  $\|V\|_{\cv_{T}(H)}$  given by \eqref{nuH}.  We
thus write
\begin{multline*}
\be\lc \int_{\mathbb{R}}|\mathcal{F}V(t,\xi)|^2|\xi|^{1-2H}d\xi \rc
\leq 2 \, \int_{\mathbb{R}}|\mathcal{F}v_0(t,\xi)|^2|\xi|^{1-2H}d\xi \\
+2 \,  \int_{\mathbb{R}}\be\lc\Big|\int_0^t\int_{\mathbb{R}}e^{-i\xi
y}e^{-\frac{\kappa}{2}(t-s)\xi^2}v(s,y)W(ds,dy)\Big|^2 \rc |\xi|^{1-2H}d\xi
:= 2\lp I_{1} + I_{2} \rp \, ,
\end{multline*}
and we handle the terms $I_{1}$ and $I_{2}$ separately.

The term $I_1$ can be easily bounded by using that $u_0 \in\dot{H}^{\frac 12-H}_0$ and recalling $v_{0}=p_{t}*u_{0}$. That is,
\[
I_1 = \int_\RR| \mathcal{F}u_0(\xi) | ^2e^{-\kappa t|\xi|^2} |\xi|^{1-2H}
d\xi \le C \, \|u_{0}\|_{\cv(H)}^{2}.
\]
 We thus focus on the estimation of $I_{2}$, and we set $f_{\xi}(s,\eta)=e^{-i\xi
\eta}e^{-\frac{\kappa}{2}(t-s)\xi^2}v(s,\eta)$. Applying the isometry
property \eqref{int isometry} 
we have:
\begin{equation*}
\be\lc\Big|\int_0^t\int_{\mathbb{R}}
e^{-i\xi y}e^{-\frac{\kappa}{2}(t-s)\xi^2}v(s,y)W(ds,dy)\Big|^2 \rc
=c_{1,H} \int_0^t \int_{\mathbb{R}}
\be\lc |\cf_{\eta}f_{\xi}(s,\eta) |^{2}\rc
|\eta|^{1-2H} \, ds d\eta,
\end{equation*}
where $\cf_{\eta}$ is  the Fourier transform with respect  to
$\eta$.  It is obvious 
 that
the Fourier transform of $e^{-i\xi y} V(y)$ is $\mathcal{F}
V(\eta+\xi)$. Thus we have
\begin{align*}
I_{2}&= C\int_0^t\int_{\mathbb{R}}\int_{\mathbb{R}}e^{-\kappa(t-s)\xi^2}
\, \be\lc|\mathcal{F}v(s,\eta+\xi)|^2 \rc |\eta|^{1-2H}|\xi|^{1-2H}
\, d\eta d\xi ds\\
&= C\int_0^t\int_{\mathbb{R}}\int_{\mathbb{R}}e^{-\kappa(t-s)\xi^2} \,
\be\lc|\mathcal{F}v(s,\eta )|^2 \rc |\eta-\xi|^{1-2H}|\xi|^{1-2H} \,
d\eta d\xi ds\, .
\end{align*}
We now bound $|\eta-\xi |^{1-2H}$ by $|\eta|^{1-2H}+|\xi|^{1-2H}$,
which yields $I_{2}\le I_{21}+I_{22}$ with:
\begin{align*}
I_{21}&=C
\int_0^t
\int_{\mathbb{R}}\int_{\mathbb{R}} e^{-\kappa(t-s)\xi^2} \,
\be\lc|\mathcal{F}v(s,\eta)|^2 \rc
|\eta|^{1-2H}|\xi|^{1-2H} \, d\eta d\xi ds \\
I_{22}&=C\int_0^t\int_{\mathbb{R}}\int_{\mathbb{R}}e^{-\kappa(t-s)\xi^2}
\, \be\lc|\mathcal{F}v(s,\eta)|^2 \rc |\xi|^{2-4H} \, d\eta d\xi
ds\,.
\end{align*}
Performing the change of variable  {$\xi \rightarrow
(t-s)^{1/2}\xi$} and then trivially bounding the integrals of the
form $\int_{\R}|\xi|^{\beta} e^{-\kappa\xi^{2}} d\xi$ by constants,  we
end up with
\begin{align*}
I_{21}&\leq C
\int_0^t  (t-s)^{H-1}
\int_{\mathbb{R}}
\be\lc|\mathcal{F}v(s,\eta)|^2 \rc
|\eta|^{1-2H} \, d\eta  \, ds \\
I_{22}&\leq C
\int_0^t  (t-s)^{2H-3/2}
\int_{\mathbb{R}}
\be\lc|\mathcal{F}v(s,\eta)|^2 \rc
 \, d\eta  \, ds .
\end{align*}
Observe that for $H\in(\frac 14, \frac 12)$ the term $(t-s)^{2H-3/2}$ is more
singular than  $(t-s)^{H-1}$, but we still have $2H-\frac 32>-1$
(this is where  we need to  impose $H>1/4$).
Summarizing our consideration  up to now, we have thus obtained
\begin{multline}\label{eq:bnd-picard-1}
\int_{\mathbb{R}}\be\lc |\mathcal{F}V(t,\xi)|^2 \rc |\xi|^{1-2H}d\xi \\
\le C _{1,T} \, { \|u_{0}\|_{\cv(H)}^{2}} + C_{2,T} \int_{0}^{t}
(t-s)^{2H-3/2} \int_{\mathbb{R}} \be\lc|\mathcal{F}v(s,\xi)|^2 \rc
\, (1+ |\xi|^{1-2H})
 \, d\xi  \, ds ,
\end{multline}
for two strictly positive constants $C_{1,T},C_{2,T}$.

The term $\be[\int_{\mathbb{R}}|\mathcal{F}V(t,\xi)|^2 d\xi ]$ in
the definition of $\|V\|_{\cv_{T}(H)}$ can be bounded with the same computations as above, and we find
\begin{multline}\label{eq:bnd-picard-2}
\int_{\mathbb{R}}\be\lc |\mathcal{F}V(t,\xi)|^2 \rc \, d\xi \\
\le C_{1,T} \, {  \|u_{0}\|_{\cv(H)}^{2}} + C_{2,T}  \int_{0}^{t}
(t-s)^{H-1} \int_{\mathbb{R}} \be\lc|\mathcal{F}v(s,\xi)|^2 \rc \,
(1+ |\xi|^{1-2H})
 \, d\eta  \, ds ,
\end{multline}
Hence, gathering our estimates \eqref{eq:bnd-picard-1} and \eqref{eq:bnd-picard-2}, our bound \eqref{eq:intg-bnd-V-lin} is easily obtained, which finishes the proof.
\end{proof}

As in the forthcoming general case, Proposition \ref{prop:convolution-fourier} is the key to the existence and uniqueness result for equation \eqref{spde}.

\begin{theorem}\label{thm:exist-uniq-picard}
Suppose that $u_{0}$ is an element of $\dot{H}^{\frac 12-H}_0$ and
$\frac{1}{4}<H<\frac{1}{2}$. Fix $T>0$. Then there is a  unique
process $u$ in the space $\cv_{T}(H)$ such that for all $t\in
[0,T]$,
\begin{equation}
u(t,\cdot)=p_t* u_0  + \int_0^t \int_{\mathbb{R}}p_{t-s}(\cdot-y)u(s,y) W(ds,dy).
\end{equation}
\end{theorem}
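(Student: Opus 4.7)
The plan is to run a Picard iteration based on the convolution bound \eqref{eq:intg-bnd-V-lin}. I set $u^{0}(t,x):=p_{t}*u_{0}(x)$ and inductively $u^{n+1}:=\gga(u^{n})$. The initial step $u^{0}\in\cv_{T}(H)$ is immediate from $|\cf(p_{t}*u_{0})(\xi)|^{2}=e^{-\ka t|\xi|^{2}}|\cf u_{0}(\xi)|^{2}\le|\cf u_{0}(\xi)|^{2}$, so that $\sup_{t\in[0,T]}\|u^{0}(t,\cdot)\|_{\cv(H)}^{2}\le\|u_{0}\|_{\cv(H)}^{2}$, and then Proposition \ref{prop:convolution-fourier} guarantees $u^{n}\in\cv_{T}(H)$ for every $n\ge 0$.

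Because \eqref{spde} is linear, $\gga$ is affine and $\gga(v_{1})-\gga(v_{2})$ is just the stochastic integral applied to $v_{1}-v_{2}$, so the bound \eqref{eq:intg-bnd-V-lin} applied to the difference with initial condition zero gives, after taking expectations, the scalar recursion
\begin{equation*}
b_{n+1}(t)\le c_{2}\int_{0}^{t}(t-s)^{2H-3/2}\,b_{n}(s)\,ds,
\qquad
b_{n}(t):=\be\|u^{n+1}(t,\cdot)-u^{n}(t,\cdot)\|_{\cv(H)}^{2}.
\end{equation*}
Since $2H-\tfrac{3}{2}>-1$ for $H>\tfrac{1}{4}$, iterating $n$ times and using the Beta identity $\int_{0}^{t}(t-s)^{a}s^{b}\,ds=t^{a+b+1}B(a+1,b+1)$ produces a factorial-type estimate of the form
\begin{equation*}
b_{n}(t)\le M\,\frac{\lp c_{2}\,\Gamma(2H-\tfrac{1}{2})\,T^{\,2H-1/2}\rp^{n}}{\Gamma\lp n(2H-\tfrac{1}{2})+1\rp},
\qquad M:=\sup_{t\in[0,T]}b_{0}(t)<\infty,
\end{equation*}
whose right-hand side is summable in $n$. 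Hence $\{u^{n}\}$ is Cauchy in $\cv_{T}(H)$ and its limit $u$ is a solution of the mild equation by passing to the limit in $u^{n+1}=\gga(u^{n})$, where the only non-routine point is interchanging limit and stochastic integral, which follows from the isometry \eqref{int isometry} combined with the $\cv_{T}(H)$-convergence.

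For uniqueness, two solutions $u,\tilde u\in\cv_{T}(H)$ produce $w:=u-\tilde u$ satisfying
\begin{equation*}
\be\|w(t,\cdot)\|_{\cv(H)}^{2}\le c_{2}\int_{0}^{t}(t-s)^{2H-3/2}\,\be\|w(s,\cdot)\|_{\cv(H)}^{2}\,ds,
\end{equation*}
so the same Beta-iteration (now a singular Gronwall lemma) yields $w\equiv 0$. The main obstacle is precisely the kernel $(t-s)^{2H-3/2}$: it is integrable only because $H>\tfrac14$, which rules out a naive contraction in the sup-in-time norm and forces the iterated Beta-function computation. An equivalent alternative would be to equip $\cv_{T}(H)$ with the weighted norm $\sup_{t\in[0,T]}e^{-\la t}\be\|\cdot(t)\|_{\cv(H)}^{2}$ and choose $\la$ large so that \eqref{eq:intg-bnd-V-lin} becomes a strict contraction; this yields the same conclusion in one shot.
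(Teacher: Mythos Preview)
Your proposal is correct and follows precisely the approach the paper indicates: the paper's proof consists of the single sentence ``The proof follows from the standard Picard iteration scheme, where we just set $u_{n+1}=\gga(u_{n})$. Details are left to the reader for sake of conciseness.'' You have supplied exactly those details---the iterated Beta/Gamma estimate for the singular kernel $(t-s)^{2H-3/2}$, the Cauchy property in $\cv_{T}(H)$, passage to the limit via the isometry, and the Gronwall-type uniqueness---so there is nothing to add.
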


\begin{proof}
The proof follows from the standard Picard iteration scheme, where we just set $u_{n+1}=\gga(u_{n})$. Details are left to the reader for sake of conciseness.
\end{proof}

\subsection{Existence and uniqueness via chaos expansions}\label{subsec: chaos}

Next, we provide another way to prove the existence and uniqueness of the solution to equation \eref{spde}, by means of chaos expansions. This will enable us to  obtain moment estimates.
 Before stating our main theorem in this direction, let us label an elementary lemma borrowed from \cite{HHNT} for further use. 

\begin{lemma}\label{lem:intg-simplex}
For $m\ge 1$ let $\alpha \in (-1+\ep, 1)^m$  with $\ep>0$ and  set $|\alpha |= \sum_{i=1}^m
\alpha_i  $. For $t\in\ott$, the $m$-th  dimensional simplex over $\ot$ is denoted by
$T_m(t)=\{(r_1,r_2,\dots,r_m) \in \R^m: 0<r_1  <\cdots < r_m < t\}$.
Then there is a constant $c>0$ such that
\[
J_m(t, \alpha):=\int_{T_m(t)}\prod_{i=1}^m (r_i-r_{i-1})^{\alpha_i}
dr \le \frac { c^m t^{|\alpha|+m } }{ \Gamma(|\alpha|+m +1)},
\]
where by convention, $r_0 =0$.
\end{lemma}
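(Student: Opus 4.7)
The plan is to establish the exact Dirichlet-type identity
\[
J_m(t,\alpha) \;=\; \frac{\prod_{i=1}^m \Gamma(\alpha_i+1)}{\Gamma(|\alpha|+m+1)}\, t^{|\alpha|+m},
\]
and then bound the numerator by $c^m$ using the hypothesis $\alpha_i > -1+\epsilon$.

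First I would proceed by induction on $m$. For $m=1$ the identity is just $\int_0^t r^{\alpha_1}\,dr = t^{\alpha_1+1}/(\alpha_1+1)$, which matches. For the inductive step, I would change variables from $(r_1,\dots,r_m)$ to $(r_1,\dots,r_{m-1},r_m)$ and integrate the innermost variable using the Beta function identity
\[
\int_0^{r_{k+1}} (r_k-r_{k-1})^{\alpha_k} (r_{k+1}-r_k)^{\alpha_{k+1}}\,dr_k
\;=\; B(\alpha_k+1,\alpha_{k+1}+1)\,(r_{k+1}-r_{k-1})^{\alpha_k+\alpha_{k+1}+1},
\]
after the preliminary shift $r_k \mapsto r_k - r_{k-1}$ on the inner variable. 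Iterating this reduction peels off one Beta function at each step; telescoping the resulting product of Beta functions $B(s_i+i,\alpha_{i+1}+1) = \Gamma(s_i+i)\Gamma(\alpha_{i+1}+1)/\Gamma(s_{i+1}+i+1)$ (with $s_i = \alpha_1+\cdots+\alpha_i$) causes the intermediate Gammas to cancel and leaves the claimed closed form.

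To get the bound, I would observe that the function $x \mapsto \Gamma(x+1)$ is continuous on the compact interval $[\epsilon, 2]$, hence bounded by some $c = c(\epsilon)$. Since each $\alpha_i+1 \in (\epsilon, 2)$, we have $\Gamma(\alpha_i+1) \le c$, and therefore $\prod_{i=1}^m \Gamma(\alpha_i+1) \le c^m$, which gives the stated estimate.

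The only mild subtlety is the restriction $\alpha_i > -1+\epsilon$ (needed both for integrability and to keep $\Gamma(\alpha_i+1)$ away from its pole at $0$); the upper bound $\alpha_i < 1$ plays no essential role in the argument beyond ensuring a uniform constant. No serious obstacle is expected — the whole argument is a clean induction using Beta-Gamma identities, and I would keep the write-up to a few lines.
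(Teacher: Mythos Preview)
Your argument is correct. The exact identity
\[
J_m(t,\alpha)=\frac{\prod_{i=1}^m \Gamma(\alpha_i+1)}{\Gamma(|\alpha|+m+1)}\,t^{|\alpha|+m}
\]
is the classical Dirichlet integral (most transparently seen after the substitution $u_i=r_i-r_{i-1}$, which turns $T_m(t)$ into the standard simplex $\{u_i>0,\ \sum u_i<t\}$), and your Beta-function iteration is an equivalent route to it. One small notational slip: in your displayed Beta identity the lower and upper limits should be $r_{k-1}$ and $r_{k+1}$ (or $0$ and $r_{k+1}-r_{k-1}$ after the shift you mention), not $0$ and $r_{k+1}$; the right-hand side you wrote is nevertheless correct. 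The bound $\prod_i\Gamma(\alpha_i+1)\le c^m$ via continuity of $\Gamma$ on $[\ep,2]$ is exactly the place where the hypothesis $\alpha_i>-1+\ep$ enters, as you note.

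As for comparison with the paper: the paper does not prove this lemma at all---it is quoted from \cite{HHNT} as an ``elementary lemma borrowed'' from that reference. So your write-up would supply a self-contained proof where the paper simply cites one; there is nothing to compare against here.
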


Let us now state a new existence and uniqueness theorem for our equation of interest.

\begin{theorem}\label{thm:exist-uniq-chaos}
Suppose that $\frac 14 <H<\frac 12$ and that the initial condition $u_0$ satisfies
\begin{equation}\label{cond:fu0}
\int_{\RR}(1+|\xi|^{\frac{1}{2}-H})|\mathcal{F}u_0(\xi)|d\xi < \infty\,.
\end{equation}
Then there exists a unique    solution to equation \eqref{spde},
that is,  there is  a unique process $u$ 
such that  $p_{t-\cdot}(x-\cdot)u$ is Skorohod integrable for any
$(t,x)\in\ott\times\R$ and relation  \eqref{eq:mild-formulation sigma}
holds true. 
\end{theorem}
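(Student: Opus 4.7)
Following the chaos expansion approach of \cite{HN,HHNT}, I first assume that a solution $u$ to \eqref{eq:mild-formulation sigma} admits a Wiener chaos decomposition $u(t,x)=\sum_{n\geq 0} I_n(f_n(\cdot,t,x))$. Inserting this representation into the mild formulation and matching kernels via the action \eqref{eq:delta-u-chaos} of the Skorohod integral on chaos expansions, I obtain a recursion that forces $f_0(t,x)=p_t*u_0(x)$ and identifies $f_n$ as the symmetrization in $(s_1,y_1,\ldots,s_n,y_n)$ of
\[
g_n(s_1,y_1,\ldots,s_n,y_n;t,x)=\1_{\{0<s_1<\cdots<s_n<t\}}(p_{s_1}*u_0)(y_1)\prod_{j=2}^n p_{s_j-s_{j-1}}(y_j-y_{j-1})\,p_{t-s_n}(x-y_n).
\]
Since this procedure uniquely determines the kernels, uniqueness of the solution will follow immediately once existence is established.

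The central task is then to prove $\sum_{n\geq 0} n!\,\|f_n(\cdot,t,x)\|_{\HH^{\otimes n}}^2<\infty$ under assumption \eqref{cond:fu0}. Because the permutations in the symmetrization have pairwise disjoint time supports, one reduces to $n!\,\|f_n(\cdot,t,x)\|_{\HH^{\otimes n}}^2=\|g_n(\cdot;t,x)\|_{\HH^{\otimes n}}^2$, the latter being evaluated on the ordered simplex $T_n(t)$. Writing each heat kernel appearing in $g_n$ as an inverse Fourier transform produces an explicit expression for the spatial Fourier transform $\mathcal{F}_{y_1,\ldots,y_n}g_n$ as a one-dimensional integral against $\mathcal{F}u_0$ with a Gaussian integrand. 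Plugging this into the $\HH^{\otimes n}$-norm and splitting each weight via
\[
|\xi_j|^{1-2H}\leq 2\bigl(|\mu_{j-1}|^{1-2H}+|\mu_j|^{1-2H}\bigr),
\]
where $\mu_j$ are the shifted Fourier parameters produced by the transform, yields $2^n$ terms, each of which reduces to a Gaussian $\xi$-integral with time singularity no worse than $(s_{j+1}-s_j)^{2H-3/2}$. The restriction $H>1/4$ is precisely what ensures $2H-3/2>-1$, so that Lemma \ref{lem:intg-simplex} applies; combined with the bound on $\mathcal{F}u_0$ provided by \eqref{cond:fu0}, this gives an estimate of the form
\[
n!\,\|f_n(\cdot,t,x)\|_{\HH^{\otimes n}}^2\leq \frac{C^n\,t^{\alpha n}}{\Gamma(\beta n+1)}\left(\int_{\R}|\mathcal{F}u_0(\eta)|(1+|\eta|^{1/2-H})\,d\eta\right)^{2}
\]
for some positive $\alpha,\beta$ depending only on $H$. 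The superexponential decay of $\Gamma(\beta n+1)^{-1}$ delivers the desired convergence.

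With $u(t,x):=p_t*u_0(x)+\sum_{n\geq 1}I_n(f_n(\cdot,t,x))$ now a well-defined element of $L^2(\Omega)$, existence is verified by checking \eqref{eq:mild-formulation sigma}. By \eqref{eq:delta-u-chaos}, the Skorohod integral $\int_0^t\int_{\R}p_{t-s}(x-y)u(s,y)W(ds,dy)$ has chaos expansion whose $(n+1)$-th kernel is exactly the symmetrization of $p_{t-s}(x-y)f_n(\cdot,s,y)\1_{\{s<t\}}$; by the recursive structure of $g_n$ this coincides with $f_{n+1}(\cdot,t,x)$, so summing over $n\geq 0$ reproduces $u(t,x)-p_t*u_0(x)$. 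The main obstacle is the quantitative estimate on $\|g_n\|_{\HH^{\otimes n}}^2$: in contrast with the smoother case $H>1/2$, the weight $|\xi|^{1-2H}$ grows at infinity and has to be absorbed by the Gaussian factors from the heat kernel, and the combinatorics of the $2^n$ splittings must be balanced against the Gamma denominator coming out of Lemma \ref{lem:intg-simplex}. The exponent $2H-3/2$ is borderline integrable and is the source of the cutoff $H>1/4$.
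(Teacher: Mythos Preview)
Your proposal is correct and follows essentially the same route as the paper: identify the chaos kernels $f_n$ recursively, pass to Fourier variables, use the elementary splitting $|\eta_i-\eta_{i-1}|^{1-2H}\le |\eta_i|^{1-2H}+|\eta_{i-1}|^{1-2H}$ to reduce to $2^{n-1}$ Gaussian integrals with worst time singularity $(s_{i+1}-s_i)^{2H-3/2}$, and then invoke Lemma~\ref{lem:intg-simplex} to produce the $\Gamma(nH+1)^{-1}$ decay. The only step you leave implicit is a Cauchy--Schwarz in the $\zeta$-variable (the Fourier parameter carrying $\mathcal{F}u_0$) that the paper uses to decouple the square and arrive at the integral $\int |\mathcal{F}u_0(\zeta)|(1+|\zeta|^{1/2-H})\,d\zeta$; otherwise the arguments coincide.
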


\begin{remark}
(i) The formulation of Theorem \ref{thm:exist-uniq-chaos} yields the definition of our solution $u$ for all $(t,x)\in\ott\times\R$. This is in contrast with Theorem \ref{thm:exist-uniq-picard} which gives a solution sitting in $\dot{H}^{\frac12-H}_0$ for every value of $t$, and thus defined a.e. in $x$ only.
(ii) Condition \eqref{cond:fu0} is satisfied by constant functions.
\end{remark}

\begin{remark}
In the later paper \cite{HLN}, the existence and uniqueness in Theorem \ref{thm:exist-uniq-chaos} is obtained under a more general initial condition. Since the proof of Theorem \ref{thm:exist-uniq-chaos} for condition \eqref{cond:fu0} is easier and shorter, we present the proof as follows. 
\end{remark}    

\begin{proof}[Proof of Theorem \ref{thm:exist-uniq-chaos}]

Suppose that $u=\{u(t,x), \, t\geq 0, x \in \R^d\}$ is a solution to equation~\eqref{eq:mild-formulation sigma} in $\laa_{H} $. Then according to \eref{eq:chaos-dcp}, for any fixed $(t,x)$ the random variable $u(t,x)$ admits the following Wiener chaos expansion
\begin{equation}\label{eq:chaos-expansion-u(tx)}
u(t,x)=\sum_{n=0}^{\infty}I_n(f_n(\cdot,t,x))\,,
\end{equation}
where for each $(t,x)$, $f_n(\cdot,t,x)$ is a symmetric element in
$\HH^{\otimes n}$. 
Hence, thanks to~\eqref{eq:delta-u-chaos} and using an iteration procedure, one can find an
explicit formula for the kernels $f_n$ for $n \geq 1$. Indeed, we have:
\begin{multline}\label{eq:expression-fn}
f_n(s_1,x_1,\dots,s_n,x_n,t,x)\\
=\frac{1}{n!}p_{t-s_{\si(n)}}(x-x_{\si(n)})\cdots p_{s_{\si(2)}-s_{\si(1)}}(x_{\si(2)}-x_{\si(1)})
p_{s_{\si(1)}}u_0(x_{\si(1)})\,,
\end{multline}
where $\si$ denotes the permutation of $\{1,2,\dots,n\}$ such that $0<s_{\si(1)}<\cdots<s_{\si(n)}<t$
(see, for instance,  formula (4.4) in \cite{HN} or  formula (3.3) in \cite{HHNT}).
Then, to show the existence and uniqueness of the solution it suffices to prove that for all $(t,x)$ we have
\begin{equation}\label{chaos}
\sum_{n=0}^{\infty}n!\|f_n(\cdot,t,x)\|^2_{\HH^{\otimes n}}< \infty\,.
\end{equation}
The remainder of the proof is devoted to prove  relation \eqref{chaos}.

Starting from relation \eqref{eq:expression-fn}, some elementary Fourier computations show that
\begin{align*}
\cf f_n(s_1,\xi_1,\dots,s_n,\xi_n,t,x)&=
\frac{c_{H}^n}{n!}  \int_\RR
\prod_{i=1}^n e^{-\frac{\kappa}{2}(s_{\si(i+1)}-s_{\si(i)})|\xi_{\si(i)}+\cdots +
\xi_{\si(1)} -\zeta|^2} \\
&\quad\times { e^{-ix (\xi_{\sigma(n)}+ \cdots + \xi_{\sigma(1)}-\zeta)}} \mathcal{F}u_0(\zeta) e^{-\frac {\ka s_{\sigma(1)}|\zeta|^2} 2} d\zeta,
\end{align*}
where we have set $s_{\si(n+1)}=t$.
Hence, owing to formula \eref{eq: H_0 element H prod} for the norm in $\HH$ (in its Fourier mode version), we have
\begin{multline}\label{eq:expression-norm-fn}
n!\| f_n(\cdot,t,x)\|_{\HH^{\otimes n}}^2 =\frac{c_H^{2n}
}{n!} \, \int_{[0,t]^n}\int_{\RR^n}\bigg|   \int_\RR \prod_{i=1}^n
e^{-\frac {\kappa}{2} (s_{\si(i+1)}-s_{\si(i)})|\xi_i+\cdots +\xi_1-\zeta |^2} { e^{-ix (\xi_{\sigma(n)}+ \cdots + \xi_{\sigma(1)}-\zeta)}} \\ 
\mathcal{F}u_0(\zeta) e^{-\frac {\kappa s_{\sigma(1)}|\zeta|^2} 2} d\zeta \bigg|^2 
 \times  \prod_{i=1}^n  |\xi_i |^{1-2H} d\xi ds\,,
\end{multline}
where $d\xi$ denotes $d\xi_1 \cdots d\xi_n$ and similarly for $ds$.
Then using the change of variable $\xi_{i}+\cdots + \xi_{1}=\eta
_{i}$, for all $i=1,2,\dots, n$ and a linearization of the above expression, we obtain
\begin{multline*}
n!\| f_n(\cdot,t,x)\|_{\HH^{\otimes n}}^2 = \frac{c_H^{2n}
}{n!}\int_{[0,t]^n}\int_{\RR^n}
 \int_{\RR^2}\prod_{i=1}^n
e^{-\frac{\kappa}{2}(s_{\si(i+1)}-s_{\si(i)})(|\eta_{i}-\zeta|^2+|\eta_i-\zeta^{\prime}|^2)} \mathcal{F}u_0(\zeta) \overline{\mathcal{F}u_0(\zeta^{\prime})} \\
\times { e^{ix(\zeta -\zeta')}}e^{-\frac{\kappa s_{\sigma(1)}(|\zeta|^2+|\zeta^{\prime}|^2)}{2}} \prod_{i=1}^n|\eta_{i}-\eta_{i-1}|^{1-2H} d\zeta d\zeta^{\prime} d\eta ds\,,
\end{multline*}
where we have set $\eta_{0}=0$. Then we  use Cauchy-Schwarz inequality and bound the term $\exp(-\kappa s_{\sigma(1)}(|\zeta|^2+|\zeta^{\prime}|^2)/2)$
by $1$ to get
\begin{multline*}
n!\| f_n(\cdot,t,x)\|_{\HH^{\otimes n}}^2 \le
\frac{c_H^{2n}}{n!}
 \int_{\RR^2} \left ( \int_{[0,t]^n} \int_{\RR^n} \prod_{i=1}^n
e^{- \kappa (s_{\si(i+1)}-s_{\si(i)})|\eta_{i}-\zeta|^2}\prod_{i=1}^n|\eta_{i}-\eta_{i-1}|^{1-2H}d\eta ds \right)^{\frac{1}{2}} \\
\times \left ( \int_{[0,t]^n} \int_{\RR^n} \prod_{i=1}^n
e^{- \kappa (s_{\si(i+1)}-s_{\si(i)})|\eta_{i}-\zeta^{\prime}|^2}\prod_{i=1}^n|\eta_{i}-\eta_{i-1}|^{1-2H}d\eta ds \right)^{\frac{1}{2}}
\left|\mathcal{F}u_0(\zeta)\right| \left|\mathcal{F}u_0(\zeta^{\prime})\right| d\zeta d\zeta^{\prime}.
\end{multline*}
Arranging the integrals again, performing the change of variables $\eta_{i}:=\eta_{i}-\zeta$ and invoking the trivial bound $|\eta_{i}-\eta_{i-1}|^{1-2H}\le |\eta_{i-1}|^{1-2H}+|\eta_{i}|^{1-2H}$, this yields
\begin{equation}\label{eq:bnd-fn-L2-1}
n!\| f_n(\cdot,t,x)\|_{\HH^{\otimes n}}^2 \le \frac{c_H^{2n}
}{n!} \Bigg(\int_{\RR} L_{n,t}^{\frac{1}{2}}(\zeta) \left
|\mathcal{F}u_0(\zeta)\right|d\zeta\Bigg)^2 ,
\end{equation}
where $L_{n,t}(\zeta)$ is 
\begin{equation*}
 \int_{[0,t]^n} \int_{\RR^n} \prod_{i=1}^n
e^{-\kappa (s_{\si(i+1)}-s_{\si(i)})|\eta_{i}|^2} (|\zeta|^{1-2H}+|\eta_1|^{1-2H})
\times \prod_{i=2}^n(|\eta_{i}|^{1-2H}+|\eta_{i-1}|^{1-2H})d\eta ds.
\end{equation*}
Let us expand the product $\prod_{i=2}^{n} (|\eta_{i}|^{1-2H}+|\eta_{i-1}|^{1-2H})$ in the integral defining $L_{n,t}(\zeta)$. We obtain an expression of the form $\sum_{\al\in D_{n}} \prod_{i=1}^{n} |\eta_{i}|^{\al_{i}}$, where $D_{n}$ is a subset of multi-indices of length $n-1$. The complete description of $D_{n}$ is omitted for sake of conciseness, and we will just use the following facts: $\text{Card}(D_{n})=2^{n-1}$ and for any $\al\in D_{n}$ we have
\begin{equation*}
|\al|\equiv \sum_{i=1}^{n} \alpha_i = (n-1)(1-2H),
\quad\text{and}\quad
\al_{i} \in \{0, 1-2H, 2(1-2H)\}, \quad i=1,\ldots, n.
\end{equation*}
This simple expansion yields the following bound
\begin{multline*}
L_{n,t}(\zeta)
\leq|\zeta|^{1-2H}\sum_{\alpha \in D_{n}} \int_{[0,t]^n} \int_{\RR^n} \prod_{i=1}^n
e^{-\kappa (s_{\si(i+1)}-s_{\si(i)})|\eta_{i}|^2} \prod_{i=1}^n |\eta_i|^{\alpha_i}d\eta ds\\
+\sum_{\alpha \in D_{n}} \int_{[0,t]^n}\int_{\RR^n}\prod_{i=1}^n e^{-\kappa (s_{\si(i+1)}-s_{\si(i)})|\eta_{i}|^2} |\eta_1|^{1-2H} \prod_{i=1}^n |\eta_i|^{\alpha_i}d\eta ds\,.
\end{multline*}
Perform the change of variable $\xi_{i}= (\kappa (s_{\sigma(i+1)}-s_{\sigma(i)}))^{1/2} \eta_{i}$ in the above integral, and notice that $\int_{\R} e^{- \xi^{2}} |\xi|^{\alpha_i}d\xi$ is bounded by a constant
for $\alpha_i>-1$. Changing the integral over $\ot^{n}$ into an integral over the simplex, we get
\begin{eqnarray*}
L_{n,t}(\zeta)&\leq& C |\zeta|^{1-2H} n! c_H^n \sum_{\alpha \in D
_{n}} {
\int_{T_n(t)}\prod_{i=1}^n
(\kappa(s_{i+1}-s_{i}))^{-\frac{1}{2}(1+\alpha_i)}ds.}\\
&&+C n! c_H^n \sum_{\alpha \in D
_{n}} {
\int_{T_n(t)}(\kappa(s_{2}-s_{1}))^{-\frac{2-2H+\alpha_1}{2}}\prod_{i=2}^n
(\kappa(s_{i+1}-s_{i}))^{-\frac{1}{2}(1+\alpha_i)}ds.}
\end{eqnarray*}
We observe that whenever $\frac{1}{4}< H < \frac{1}{2}$, we have $\frac12(1+\al_{i})<1$ for all $i=2,\ldots n$, and it is easy to see that $\alpha_1$ is at most $1-2H$ so $\frac{1}{2}(2-2H+\alpha_1)<1$.
 (The condition $H>1/4$ comes from the requirement that 
when $\alpha_1=1-2H$, we need 
$\frac{1}{2}(2-2H+\alpha_1)=\frac{1}{2}(3-4H)<1$.)
 Thanks to Lemma \ref{lem:intg-simplex} and recalling that $\sum_{i=1}^n\alpha_i = (n-1)(1-2H)$ for all $\al\in D_{n}$, we thus conclude that
\begin{equation*}
 L_{n,t}(\zeta)
\leq\frac{C (1+t^{\frac{1}{2}-H}\kappa^{\frac{1}{2}-H}|\zeta|^{1-2H})n! c^nc_H^n t^{nH}\kappa^{nH-n}}{\Gamma(nH+1)}\,.
\end{equation*}
Plugging this expression into \eqref{eq:bnd-fn-L2-1}, we end up with
\begin{equation}\label{eq:bnd-H-norm-fn}
n!\| f_n(\cdot,t,x)\|_{\HH^{\otimes n}}^2
\leq
\frac{C c_H^n c^n t^{nH}\kappa^{nH-n}}{\Gamma(nH+1)}\left(\int_{\RR}(1+t^{\frac{1}{2}-H}\kappa^{\frac{1}{2}-H}|\zeta|^{\frac{1}{2}-H})\left| \mathcal{F}u_0(\zeta)\right| d\zeta\right)^2.
\end{equation}
The proof of \eqref{chaos} is now easily completed thanks to the asymptotic behavior of the Gamma
function and our assumption of $u_0$, and this finishes the existence and uniqueness proof.
\end{proof}

\section{Moment formula and bounds}
\label{sec:Anderson.momentbounds}

In this section we derive the Feynman-Kac formula for the moments of the solution to equation \eqref{spde} and the upper and lower bounds for the moments of the solution to equation \eref{spde} which allow us to conclude on the intermittency of the solution. We proceed by first getting an approximation result for $u$, and then deriving the upper and lower bounds for the approximation.
\subsection{Approximation of the solution}
The approximation of the solution we consider is based on the following approximation of the noise $W$.
For each $\ep>0$ and $\vp\in  \HH $,  we define
 	\begin{equation}\label{eq:cov-W-epsilon}
	W_{\varepsilon}(\varphi)
	= \int_0^t \int_{\mathbb{R}} [\rho_{\ep}*\varphi](s,x)W(ds,dy)
	=\int_0^t \int_{\mathbb{R}}\int_{\mathbb{R}}\varphi(s,x)\rho_{\varepsilon}(x-y)W(ds,dy)dx\,,
	\end{equation}
	where $\rho_ \varepsilon (x)=(2\pi \varepsilon)^{-\frac{1}{2}} e^{-{x^2}/{(2\varepsilon)}}$. Notice that the covariance of $W_\ep$ can be read (either in Fourier or direct coordinates) as:
	\begin{eqnarray}\label{eq:ident-cov-W-ep}
	\be\left[W_{\varepsilon}(\varphi) W_{\varepsilon}(\psi) \right]
	&=&
	c_{1,H} \int_0^t \int_{\mathbb{R}}
	\mathcal{F}\varphi(s,\xi)\, \overline{\mathcal{F}\psi(s,\xi)} \, e^{-\varepsilon |\xi|^2} |\xi|^{1-2H} d\xi ds   \\
	&=&
	c_{1,H} \int_0^t \int_{\mathbb{R}}\int_{\mathbb{R}}\varphi(s,x)f_{\varepsilon}(x-y)\psi(s,y) \, dx   dy   ds,  \notag
	\end{eqnarray}
	for every $\varphi, \psi$ in $\HH$,	where $f_{\ep}$ is given by $f_{\varepsilon}(x)=
	\mathcal{F}^{-1}(e^{-\varepsilon |\xi|^2} |\xi|^{1-2H})$. In other
	words, $W_\ep$ is still a white noise in time but its space
	covariance is now given by $f_{\ep}$. Note that $f_{\varepsilon}$
	is a real  positive-definite function, but is not necessarily
	positive.
The noise $W_{\ep}$ induces an approximation to the mild formulation of equation \eref{spde}, namely
\begin{equation}\label{appr eq}
u_{\ep}(t,x)=p_t* u_0(x) + \int_0^t \int_{\mathbb{R}}p_{t-s}(x-y)u_{\ep}(s,y) \, W_{\ep}(ds,dy) ,
\end{equation}
where the integral is understood (as in Subsection \ref{sec:picard}) in the It\^o sense. We will start by a formula for the moments of $u_{\ep}$.

\begin{proposition}\label{prop:appro-moments}
Let $W_{\ep}$ be the noise defined by \eqref{eq:cov-W-epsilon}, and
assume $\frac{1}{4}<H<\frac{1}{2}$.
Assume $u_0$ is such that
$\int_{\RR}(1+|\xi|^{\frac{1}{2}-H})|\mathcal{F}u_0(\xi)|d\xi<
\infty$.  Then

\noindent
\emph{(i)} Equation \eqref{appr eq} admits a unique solution.

\noindent
\emph{(ii)} For any integer $n \geq 2$ and $(t,x)\in\ott\times\R$, we have
\begin{equation}\label{appro moment}
\be \left[ u^n_{\varepsilon}(t,x)\right] = { \be_B
\left[\prod_{j=1}^n u_0(x+B_{\kappa t}^j) \exp \left( c_{1,H} \sum_{1\leq j\neq k
\leq n} V_{t,x}^{\ep,j,k}\right)\right],}
\end{equation}
with
\begin{equation}\label{eq:def-V-tx-epsilon}
V_{t,x}^{\ep,j,k}
=
\int_0^t f_{\varepsilon}(B_{ \kappa r}^j-B_{\kappa r}^k)dr
=
\int_0^t \int_{\mathbb{R}}e^{-\varepsilon |\xi|^2} |\xi|^{1-2H} e^{i\xi (B_{\kappa r}^j-B_{\kappa r}^k)} \, d\xi dr .
\end{equation}
In formula \eqref{eq:def-V-tx-epsilon}, $\{ B^j; j=1,\dots,n\}$ is a family of $n$ independent standard Brownian motions  which are also independent of $W$ and $\be_{B}$ denotes  the expected value with respect to the randomness in $B$ only.

\noindent
\emph{(iii)} The quantity  $\be [u^n_{\varepsilon}(t,x)]$  is uniformly bounded in $\ep$. More generally, for any $a>0$ we have
\begin{equation*}
\sup_{\ep>0}
\be_B \left[ \exp \left( a \sum_{1\leq j\neq k \leq n} V_{t,x}^{\ep,j,k}\right)\right] 
\equiv c_{a}<\infty .
\end{equation*}
\end{proposition}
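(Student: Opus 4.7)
The strategy is to handle the three parts in order, exploiting the fact that for each fixed $\ep>0$ the mollified noise $W_\ep$ has a bounded continuous spatial covariance $c_{1,H}f_\ep$ (since $|\xi|^{1-2H}e^{-\ep|\xi|^2}\in L^1(\R)$), a considerable improvement over $W$. For part (i), existence and uniqueness is then standard: one may repeat the Picard argument of Theorem \ref{thm:exist-uniq-picard} with the integrable weight $|\xi|^{1-2H}e^{-\ep|\xi|^2}$ in place of $|\xi|^{1-2H}$ (so that the constraint $H>1/4$ is no longer needed at this stage), or invoke the chaos expansion argument of Theorem \ref{thm:exist-uniq-chaos}, which converges straightforwardly under this mollification.

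For part (ii), since $W_\ep$ is space-smooth, the classical Feynman--Kac representation applies and gives
\begin{equation*}
u_\ep(t,x)=\be_B\!\left[u_0(x+B_{\kappa t})\exp\!\left(\int_0^t\!\!\int_\R \rho_\ep\!\left(x+B_{\kappa(t-s)}-B_{\kappa t}-y\right)W(ds,dy)-\tfrac{c_{1,H}t}{2}f_\ep(0)\right)\right],
\end{equation*}
where the correction term is the It\^o-Stratonovich adjustment, i.e. half the quadratic variation of the stochastic integral. Raising to the $n$th power with $n$ independent copies $B^1,\dots, B^n$ of $B$ (independent of $W$) and then taking $\be_W$ of the resulting exponential of a centered Gaussian (in $W$), the problem reduces to computing the variance of the sum of the $n$ stochastic integrals. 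The $n$ diagonal contributions $j=k$ cancel the correction $-\tfrac{nc_{1,H}t}{2}f_\ep(0)$ exactly, while for the off-diagonal pairs a time reversal $s\mapsto t-s$ (under which the joint law of $(B^1,\ldots,B^n)$ is invariant) turns the shifted increments into $B^j_{\kappa r}-B^k_{\kappa r}$, producing $c_{1,H}\sum_{j\ne k}V^{\ep,j,k}_{t,x}$ and establishing \eqref{appro moment}.

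For part (iii), which is the heart of the matter, set $S_\ep:=\sum_{1\le j\ne k\le n}V_{t,x}^{\ep,j,k}$, expand $\be_B[\exp(aS_\ep)]=\sum_m\frac{a^m}{m!}\be_B[S_\ep^m]$, and expand $S_\ep^m$ as a sum of at most $[n(n-1)]^m$ products $\prod_{i=1}^m V^{\ep,j_i,k_i}$. By the Fourier representation in \eqref{eq:def-V-tx-epsilon} and Fubini,
\begin{equation*}
\be_B\!\left[\prod_{i=1}^m V^{\ep,j_i,k_i}\right]=\int_{[0,t]^m}\!\!\!\int_{\R^m}\prod_{i=1}^m e^{-\ep|\xi_i|^2}|\xi_i|^{1-2H}\,\be_B\!\left[e^{i\sum_{i}\xi_i(B^{j_i}_{\kappa r_i}-B^{k_i}_{\kappa r_i})}\right]d\xi\,dr.
\end{equation*}
The inner expectation is a Gaussian characteristic function of the form $\exp(-\tfrac{\kappa}{2}Q_m(\xi,r))$ for some nonnegative quadratic form $Q_m$; after bounding $e^{-\ep|\xi|^2}\le 1$ (which delivers the uniformity in $\ep$), symmetrizing the time integration onto the simplex $T_m(t)$, performing the change of variables $\eta_i=\xi_i+\cdots+\xi_1$, bounding $|\eta_i-\eta_{i-1}|^{1-2H}\le|\eta_i|^{1-2H}+|\eta_{i-1}|^{1-2H}$, and integrating out the Gaussian in $\eta$ exactly as in the proof of Theorem \ref{thm:exist-uniq-chaos}, Lemma \ref{lem:intg-simplex} yields the uniform-in-$\ep$ estimate $\be_B[\prod_{i=1}^m V^{\ep,j_i,k_i}]\le C^m m!\,t^{mH}/\Gamma(mH+1)$. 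Summing over configurations and over $m$ gives a convergent Mittag-Leffler-type series $\sum_m[an(n-1)C]^m t^{mH}/\Gamma(mH+1)=:c_a<\infty$, independent of $\ep$.

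The main obstacle lies in the Gaussian computation for a generic mixed configuration $(j_i,k_i)$ in part (iii): the form $Q_m$ depends nontrivially on the configuration, and one must verify that in all cases the exponents produced for Lemma \ref{lem:intg-simplex} remain strictly above $-1$, which is precisely where the hypothesis $H>1/4$ (giving $\tfrac12(3-4H)<1$) enters.
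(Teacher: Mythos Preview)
Your treatment of parts (i) and (ii) matches the paper's approach: existence via chaos/Picard is routine once the spatial covariance is bounded, and the moment formula \eqref{appro moment} is obtained from the Feynman--Kac representation by computing the conditional Gaussian expectation in $W$. (A minor point: in your Feynman--Kac display the kernel $\rho_\ep(x+B_{\kappa(t-s)}-B_{\kappa t}-y)$ seems to carry an extra $-B_{\kappa t}$; the paper's kernel is $\rho_\ep(x+B_{\kappa(t-r)}-y)$, but this is cosmetic once one time-reverses.)

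The real issue is in part (iii). You correctly identify the ``main obstacle'' but do not close it, and in fact the argument as written does not go through. The change of variables $\eta_i=\xi_1+\cdots+\xi_i$ from Theorem~\ref{thm:exist-uniq-chaos} works because there all the $\xi_i$ multiply the \emph{same} Gaussian path, so that the covariance becomes $\sum_i(s_{\sigma(i+1)}-s_{\sigma(i)})|\eta_i|^2$ and factorizes over $i$. For a generic configuration $(j_i,k_i)$ the quadratic form $Q_m(\xi,r)=\sum_{l=1}^n\mathrm{Var}\big(\sum_{i:j_i=l}\xi_i B^l_{\kappa r_i}-\sum_{i:k_i=l}\xi_i B^l_{\kappa r_i}\big)$ has no such product structure, the substitution $\eta_i=\xi_1+\cdots+\xi_i$ does not diagonalize it, and one cannot simply ``integrate out the Gaussian in $\eta$'' term by term. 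So the claimed bound $\be_B\big[\prod_i V^{\ep,j_i,k_i}\big]\le C^m m!\,t^{mH}/\Gamma(mH+1)$ is unproved for mixed configurations.

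The paper avoids this entirely by inserting a H\"older step \emph{before} the expansion: since all the $V^{\ep,j,k}_{t,x}$ have the same law, $\be_B\big[\exp\big(a\sum_{j\ne k}V^{\ep,j,k}_{t,x}\big)\big]\le\be_B\big[\exp\big(cF_t^\ep\big)\big]$ for a suitable constant $c$, where $F_t^\ep$ involves only a \emph{single} pair of independent Brownian motions. Then, and only then, does the change of variables from Theorem~\ref{thm:exist-uniq-chaos} apply and yield the required $c^l t^{lH}/\Gamma(lH+1)$ bound on $\be_B[(F_t^\ep)^l]$. Inserting this H\"older reduction at the start of your Step~(iii) is what makes the argument complete.
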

\begin{proof}
The proof of item (i) is almost identical to the proof of Theorem~\ref{thm:exist-uniq-chaos}, and is omitted for sake of conciseness. Moreover, in the proof of (ii) and (iii), we may take $u_0(x)\equiv 1$ for simplicity.

In order to check item (ii), set
\begin{equation}\label{eq:def-Atx}
A_{t,x}^{\varepsilon}(r,y)=
\rho_{\varepsilon}(B_{\kappa (t-r)}^x-y),
\quad\text{and}\quad
\alpha^{\varepsilon}_{t,x}=\|A^{\varepsilon}_{t,x}\|^2_{\HH}.
\end{equation}
Then one can prove, similarly to Proposition 5.2 in \cite{HN}, that $u_{\ep}$ admits a Feynman-Kac representation of the form
\begin{equation}\label{eq:feynman-u-ep}
u_{\varepsilon}(t,x)=\be_B \lc \exp \lp  W (
A_{t,x}^{\varepsilon})-\frac{1}{2}\alpha^{\varepsilon}_{t,x}\rp
\rc\,.
\end{equation}
Now fix an integer $n \geq 2$. According to \eqref{eq:feynman-u-ep} we have
\begin{equation*}
\be \lc  u^n_{\varepsilon}(t,x)\rc=\be_W \lc\prod_{j=1}^n
\be_B\lc \exp \lp   W(A^{\varepsilon,
B^j}_{t,x})-
\frac{1}{2}\alpha_{t,x}^{\varepsilon,B^j}\rp \rc \rc\,,
\end{equation*}
where for any $j=1,\dots,n$,  $A_{t,x}^{\varepsilon,B^j}$ and $\alpha_{t,x}^{\varepsilon,B^j}$ are evaluations of  \eqref{eq:def-Atx} using the Brownian motion $B^j$. Therefore,  since $W(A^{\varepsilon, B^j}_{t,x})$ is a Gaussian random variable conditionally on $B$, we obtain
\begin{eqnarray*}
\be \lc  u^n_{\varepsilon}(t,x)\rc&=&
\be_B \lc
\exp \lp\frac{1}{2}\|\sum_{j=1}^n A_{t,x}^{\varepsilon,B^j}\|^2_{\HH}
-\frac{1}{2}\sum_{j=1}^n \alpha_{t,x}^{\varepsilon,B^j}\rp\rc \notag\\
&=& \be_B \lc
\exp \lp\frac{1}{2}\|\sum_{j=1}^n A_{t,x}^{\varepsilon,B^j}\|^2_{\HH}
-\frac{1}{2}\sum_{j=1}^n \| A_{t,x}^{\varepsilon,B^j}\|^2_{\HH}\rp\rc   \notag\\
&=&\be_B \lc \exp \lp\sum_{1\leq i < j \leq n}\langle
A_{t,x}^{\varepsilon,B^i},
A_{t,x}^{\varepsilon,B^j}\rangle _{\HH}\rp\rc\,.
\end{eqnarray*}
The evaluation of $\langle A_{t,x}^{\varepsilon,B^i}, A_{t,x}^{\varepsilon,B^j}\rangle _{\HH}$ easily yields our claim \eqref{appro moment}, the last details being left to the patient reader.

Let us now prove item (iii), namely
\begin{equation}\label{appro moment finite}
\sup_{\varepsilon > 0} \sup_{t \in [0,T], x \in \mathbb{R}}
\be \lc  u^n_{\varepsilon}(t,x)\rc < \infty\,.
\end{equation}
To this aim, observe first that we have obtained an expression \eqref{appro moment} which does not depend on $x\in\R$, so that the $\sup_{t \in [0,T], x \in \mathbb{R}}$ in \eqref{appro moment finite} can be reduced to a $\sup$ in $t$ only. Next, still resorting to formula \eqref{appro moment}, it is readily seen that it suffices to show that for two independent Brownian motions $B$ and $\tilde{B}$, we have
\begin{equation}\label{eq:bnd-exp-F-t-epsilon}
\sup_{\varepsilon > 0, t\in [0,T]} \be_{B} \lc \exp \left (c \, F_t^{\varepsilon} \right)\rc <\infty,
\quad\text{with}\quad
F_t^{\varepsilon} \equiv
\int_0^t \int_{\mathbb{R}} e^{-\varepsilon |\xi|^2} |\xi|^{1-2H} e^{i \xi (B_{\kappa r}-\tilde{B}_{\kappa r})}d\xi dr,
\end{equation}
for any positive constant $c$.  In order to prove \eqref{eq:bnd-exp-F-t-epsilon}, we expand the exponential and write:
\begin{equation}\label{eq:moments-F-t-epsilon}
\be_{B} \lc \exp (c \, F_t^{\varepsilon})\rc
=\sum_{l=0}^{\infty}\frac{\be_{B} \lc (c \, F_t^{\varepsilon})^l\rc}{l!}\,.
\end{equation}
Next, we have
\begin{align*}
\be_{B} \lc\left( F_t^{\varepsilon}\right)^l\rc&=
\be_{B} \lc \int_{[0,t]^l} \int_{\RR^l}
\prod_{j=1}^l  e^{-i  \xi_j (B_{\kappa r_j}-\tilde{B}_{\kappa r_j})-\varepsilon  |\xi_j|^2} |\xi_j|^{1-2H} d\xi dr \rc \\&
\leq
\int_{[0,t]^l} \int_{\RR^l}
\prod_{j=1}^{l} e^{-\kappa (t-r_{\sigma(j)})|\xi_j+\dots+\xi_1|^2} \, |\xi_j|^{1-2H} \, d\xi dr\,,
\end{align*}
where $\sigma$ is the permutation on $\{1,2,\dots, l\}$ such that $t \geq r_{\sigma(l)} \geq \cdots \geq r_{\sigma(1)}$. We have thus gone back to an expression which is very similar to \eqref{eq:expression-norm-fn}. We now proceed as in the proof of Theorem \ref{thm:exist-uniq-chaos} to show that \eref{appro moment finite} holds true from equation \eqref{eq:moments-F-t-epsilon}.
\end{proof}

Starting from Proposition \ref{prop:appro-moments}, let us take limits in order to get the moment formula for the solution $u$ to equation~\eqref{spde}.

\begin{theorem}\label{THM moment}
Assume $\frac{1}{4}<H<\frac{1}{2}$ and  consider $n\ge 1$, $j,k\in\{1,\ldots,n\}$ with $j\ne k$.
For $(t,x)\in\ott\times\R$,  denote by   $V_{t,x}^{j,k}$  the limit  in $L^2(\Omega)$  as $\ep\rightarrow 0$  of
\begin{equation*}%
V_{t,x}^{\ep,j,k}
=
\int_0^t \int_{\mathbb{R}}e^{-\varepsilon |\xi|^2} |\xi|^{1-2H} e^{i\xi (B_{ \kappa r}^j-B_{\kappa r}^k)}d\xi dr.
\end{equation*}
Then  $\be \lc  u^n_{\varepsilon}(t,x)\rc$ converges as $\varepsilon \to 0$ to $\be [u^n(t,x)]$, which is given by
\begin{equation}\label{moment}
\be[u^n(t,x)] ={ \be_{B}\left[ \prod_{j=1}^n u_0(B^j_{\kappa t}+x)\exp \left(
c_{1,H} \sum_{1\leq j \neq k \leq n} V_{t,x}^{j,k} \right)\right]\, .}
\end{equation}
\end{theorem}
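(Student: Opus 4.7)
The plan is to prove the theorem in two coordinated pieces: first show that the right-hand side of \eqref{appro moment} converges as $\ep\to 0$ to the expression claimed in \eqref{moment}; second, show that the left-hand side $\be[u_{\ep}^n(t,x)]$ converges to $\be[u^n(t,x)]$. Then \eqref{moment} follows by equating the limits.

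\textbf{Step 1 (existence of the limit $V_{t,x}^{j,k}$).} I will show that for fixed $j\neq k$, the family $\{V_{t,x}^{\ep,j,k}\}_{\ep>0}$ is Cauchy in $L^2(\Omega_B)$. Since $B^j$ and $B^k$ are independent, $B^j-B^k$ has the law of $\sqrt{2}\,\tilde B$ for a standard Brownian motion $\tilde B$. Writing $V_{t,x}^{\ep,j,k}-V_{t,x}^{\ep',j,k}$ as a Fourier integral against $(e^{-\ep|\xi|^2}-e^{-\ep'|\xi|^2})|\xi|^{1-2H}$ and squaring, the expectation reduces, by Brownian calculus, to an integral of the form $\int_{[0,t]^2}\int_{\R^2}(\cdots)|\xi|^{1-2H}|\xi'|^{1-2H}\,d\xi d\xi' ds ds'$ analogous to the estimates leading to \eqref{eq:expression-norm-fn}; the constraint $H>1/4$ ensures integrability. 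The Cauchy property yields the $L^2$-limit $V_{t,x}^{j,k}$.

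\textbf{Step 2 (convergence of the Feynman--Kac right-hand side).} Having $V^{\ep,j,k}_{t,x}\to V^{j,k}_{t,x}$ in $L^2(\Omega_B)$, I pass to an a.s.\ convergent subsequence. To convert convergence inside $\exp$ into convergence of expectations, I invoke Proposition~\ref{prop:appro-moments}(iii), which supplies the uniform (in $\ep$) exponential moment bound $\sup_\ep \be_B[\exp(a\sum_{j\neq k} V_{t,x}^{\ep,j,k})]<\infty$ for every $a>0$. By Fatou, the same bound holds for the limit $\sum V^{j,k}_{t,x}$, so choosing $a$ strictly larger than $c_{1,H}$ gives uniform integrability of $\exp(c_{1,H}\sum V^{\ep,j,k}_{t,x})$; together with the boundedness of $u_0$ after smoothing (or, in the general case, the integrability hypothesis \eqref{cond:fu0}), Vitali's convergence theorem delivers convergence of the right-hand side of \eqref{appro moment} to the right-hand side of \eqref{moment}.

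\textbf{Step 3 (convergence of moments of $u_\ep$).} I will prove $u_\ep(t,x)\to u(t,x)$ in $L^2(\Omega)$ by comparing Wiener chaos expansions. The solution $u_\ep$ has kernels $f_n^\ep$ obtained from \eqref{eq:expression-fn} by inserting the regularization $\rho_\ep$ in the noise, so that $\cf f_n^\ep$ equals $\cf f_n$ times $\prod_{i=1}^n e^{-\ep|\xi_i|^2/2}$ in the Fourier variables dual to $\HH^{\otimes n}$. The bound \eqref{eq:bnd-H-norm-fn} holds uniformly in $\ep$ for $n!\|f_n^\ep\|^2_{\HH^{\otimes n}}$, and dominated convergence gives $f_n^\ep\to f_n$ in $\HH^{\otimes n}$. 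Summing in $n$ using the uniform bound from \eqref{eq:bnd-H-norm-fn} yields $u_\ep\to u$ in $L^2(\Omega)$. To upgrade this to convergence of $n$-th moments, I apply equi-integrability: Proposition~\ref{prop:appro-moments}(iii) bounds $\be[u_\ep^{2n}(t,x)]$ uniformly in $\ep$ (by using the same argument with $2n$ copies of Brownian motions), so $\{u_\ep^n(t,x)\}_\ep$ is uniformly integrable, and $L^2$ convergence combined with uniform integrability of the $n$-th powers gives $\be[u_\ep^n(t,x)]\to\be[u^n(t,x)]$.

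\textbf{Main obstacle.} The delicate point is Step~1: showing the existence of $V_{t,x}^{j,k}$ as an $L^2(\Omega_B)$ limit, since the Fourier weight $|\xi|^{1-2H}$ is non-integrable at infinity and the underlying object is (morally) a fractional derivative of Brownian local time. The argument uses the Brownian scaling $B^j-B^k\stackrel{d}{=}\sqrt 2 \tilde B$, a change-of-variables of the type $\xi_i+\cdots+\xi_1=\eta_i$ as in \eqref{eq:expression-norm-fn}, and exactly the condition $H>1/4$ to control the temporal singularity of order $2H-3/2$. Once this is in place, the rest combines cleanly with Proposition~\ref{prop:appro-moments}.
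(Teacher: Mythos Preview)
Your argument is correct, but Steps~2 and~3 take a different route from the paper's.

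For Step~2, the paper avoids the subsequence extraction entirely by using the elementary bound $|e^{x}-e^{y}|\le (e^{x}+e^{y})|x-y|$, followed by Cauchy--Schwarz: this converts the $L^2(\Omega_B)$ convergence of $\sum V_{t,x}^{\ep,j,k}$ directly into $L^1(\Omega_B)$ convergence of the exponentials, using Proposition~\ref{prop:appro-moments}(iii) to control the factor $e^{x}+e^{y}$. Your Vitali argument via an a.s.\ convergent subsequence works too, but strictly speaking you need the subsubsequence trick to conclude convergence of the full family; the paper's inequality is slightly cleaner here.

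The more substantial difference is Step~3. You identify $\lim_{\ep\to 0} u_\ep$ with $u$ by comparing Wiener chaos expansions: since $W_\ep(\varphi)=W(\rho_\ep*\varphi)$, the kernels of $u_\ep$ with respect to $W$ are $f_n$ convolved with $\rho_\ep$ in each spatial slot, and dominated convergence in $\HH^{\otimes n}$ together with the uniform bound~\eqref{eq:bnd-H-norm-fn} gives $u_\ep\to u$ in $L^2(\Omega)$. The paper instead stays on the Feynman--Kac side: it computes $\be[u_\ep(t,x)u_{\ep'}(t,x)]=\be_B[\exp(\langle A^{\ep,B^1}_{t,x},A^{\ep',B^2}_{t,x}\rangle_\HH)]$ and shows this has a limit as $\ep,\ep'\to 0$, so $u_\ep$ is Cauchy in $L^2(\Omega)$ with some limit $v$; it then uses the duality relation~\eqref{dual} to verify that $v$ satisfies the mild formulation~\eqref{eq:mild-formulation sigma}, whence $v=u$ by uniqueness. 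Your chaos-expansion approach is arguably more direct and ties in better with Section~\ref{subsec: chaos}; the paper's approach has the advantage of never leaving the Feynman--Kac framework, which is useful when moment formulas are the goal and the chaos norms are not already in hand.
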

We note that in a recent paper \cite{HLN}, the moment formula for general covariance function is obtained. However we present the proof here for the sake of completeness.
\begin{proof}

As in Proposition \ref{prop:appro-moments}, we will prove the theorem for $u_0 \equiv 1$ for simplicity. For
any $p\ge 1$ and $1\le j < k \le n$, we can easily prove that
$V_{t,x}^{\ep,j,k}$ converges in $L^{p}(\oom)$ to $V_{t,x}^{j,k}$
defined by
\begin{equation}\label{eq:def-Vtx-jk}
V_{t,x}^{j,k}
=
\int_0^t \int_{\mathbb{R}} |\xi|^{1-2H} e^{i\xi (B_{\kappa r}^j-B_{\kappa r}^k)}d\xi dr.
\end{equation}
Indeed, this is due to the fact that $e^{-\varepsilon |\xi|^2} |\xi|^{1-2H} e^{i\xi (B_{\kappa r}^j-B_{\kappa r}^k)}$ converges to $|\xi|^{1-2H} e^{i\xi (B_{\kappa r}^j-B_{\kappa r}^k)}$ in the $d\xi\otimes dr\otimes d\bp$ sense, plus standard uniform integrability arguments. Now, taking into account relation \eqref{appro moment},  Proposition \ref{prop:appro-moments} (iii), the fact that $V_{t,x}^{\ep,j,k}$ converges to $V_{t,x}^{j,k}$ in $L^{2}(\oom)$ as $\ep\to 0$, and the inequality $|e^{x}-e^{y}|\leq (e^x+e^y)|x-y|$, we obtain
\begin{eqnarray*}
&&\be_B\left|\exp \left(c_{1,H}\sum_{1\leq j\neq k \leq n}V_{t,x}^{\epsilon,j,k} \right)-\exp \left(c_{1,H}\sum_{1\leq j\neq k \leq n}V_{t,x}^{j,k} \right)\right|\\
&\leq&\sup_{\epsilon >0}2\left(\be_B\left|\exp \left(2c_{1,H}\sum_{1\leq j\neq k \leq n}V_{t,x}^{\epsilon,j,k} \right)+\exp \left(2c_{1,H}\sum_{1\leq j\neq k \leq n}V_{t,x}^{j,k} \right)\right|^2\right)^{\frac{1}{2}} \\
&&\quad \times \left(\be_B \left|c_{1,H}\sum_{1\leq j\neq k \leq n}V_{t,x}^{\epsilon,j,k}-c_{1,H}\sum_{1\leq j\neq k \leq n}V_{t,x}^{j,k}\right|^2\right)^{\frac{1}{2}}\,,
\end{eqnarray*}
which implies 
\begin{eqnarray}\label{eq:lim-moments-u-epsilon}
\lim_{\ep\to 0} \be \lc  u^n_{\varepsilon}(t,x)\rc
&=&
\lim_{\ep\to 0} \be_B \left[ \exp \left( c_{1,H} \sum_{1\leq j\neq k \leq n} V_{t,x}^{\ep,j,k}\right)\right]  \notag \\
&=&
\be_B \left[ \exp \left( c_{1,H} \sum_{1\leq j\neq k \leq n} V_{t,x}^{j,k}\right)\right].
\end{eqnarray}

To end the proof, let us now identify the right hand side of \eqref{eq:lim-moments-u-epsilon} with $\be [u^n(t,x)]$, where $u$ is the solution to equation \eqref{spde}. For $\ep,\ep'>0$ we write
\[
\be \lc  u_{\varepsilon}(t,x) \, u_{\varepsilon'}(t,x)  \rc=
\be_B \lc  \exp \lp\ \langle A^{\varepsilon,B^1}_{t,x} , A^{\varepsilon',
B^2}_{t,x}  \rangle_{\HH}\rp\rc\, ,
\]
where we recall that $A^{\varepsilon,B}_{t,x}$ is defined by
relation \eqref{eq:def-Atx}. As before we can show that this
converges as $\varepsilon, \varepsilon'$ tend to zero. So,
$u_{\varepsilon}(t,x)$ converges in $L^2$ to some limit $v(t,x)$,
and the limit is actually in  $L^p$ , for all $p \geq 1$. Moreover,
$\be [v^k(t,x)]$ is equal  to the right hand side
of~\eref{eq:lim-moments-u-epsilon}.   Finally,  for any smooth random
variable $F$ which is a linear combination of $W({\bf
1}_{[a,b]}(s)\varphi(x))$, where $\varphi$ is a $C^{\infty}$
function with compact support,  using the 
duality relation \eqref{dual}, we have
\begin{equation}\label{eq:duality-u-varepsilon}
{  \be \lc F u_{\varepsilon}(t,x)\rc
 =\be \lc F\rc+\be \lc \langle  Y^{\ep} ,DF\rangle _{\HH}\rc,}
\end{equation}
where
\begin{equation*}
Y^{t,x}({s,z})= \lp \int_{\mathbb{R}}
p_{t-s}(x-y) \, p_{\varepsilon}(y-z) u_{\varepsilon} (s,y)\, dy \rp \1_{\ot}(s) .
\end{equation*}
Letting $\varepsilon$ tend to zero in equation
\eref{eq:duality-u-varepsilon}, after some easy calculation we
get
\begin{equation*}
\be [F v_{t,x}]= \be[ F]  +\be \lc \langle DF, v
p_{t-\cdot}(x-\cdot)\rangle_{\HH}\rc\,.
\end{equation*}
This equation is valid for any $F \in \mathbb{D}^{1,2}$ by
approximation. So the above equation implies that the process $v$
is the solution of equation \eqref{spde}, and by the uniqueness of
the solution we have $v=u$.
\end{proof}

\subsection{Intermittency estimates}
In this subsection we prove some upper and lower bounds on the moments of the solution which entail the intermittency phenomenon.

\begin{theorem}\label{thm:intermittency-estimates}
Let $\frac{1}{4}<H<\frac{1}{2}$, and consider the solution $u$ to
equation \eqref{spde}. For simplicity we assume that the initial condition is $u_0(x)\equiv 1$.  Let $n \geq 2$ be an integer, $x\in\R$ and
$t\ge 0$. Then there  exist
some positive  constants $c_{1},c_{2},c_{3}$ independent of
$n$, $t$ and $\kappa$ with
$0<c_{1}<c_{2}<\infty$ satisfying
\begin{equation}\label{eq:intermittency-bounds}
\exp (c_{1} n^{1+\frac{1}{H}}\kappa^{1-\frac{1}{H}}t)
\leq \be\lc u^n(t,x) \rc
\leq c_{3} \exp \big(c_{2} n^{1+\frac{1}{H}}\kappa^{1-\frac{1}{H}} t\big)\,.
\end{equation}
\end{theorem}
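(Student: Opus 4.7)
I would split the argument into an upper bound (via the Wiener chaos expansion) and a lower bound (via the Feynman-Kac moment formula of Theorem \ref{THM moment}), treated independently.

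For the upper bound, start from the chaos expansion $u(t,x)=\sum_{k\ge 0}I_k(f_k(\cdot,t,x))$ provided by Theorem \ref{thm:exist-uniq-chaos}. Minkowski's inequality combined with Nelson's hypercontractivity $\|I_k(f_k)\|_{L^n(\Om)}\le (n-1)^{k/2}\|I_k(f_k)\|_{L^2(\Om)}$ gives
\[
\|u(t,x)\|_{L^n(\Om)}\leq \sum_{k=0}^{\infty}(n-1)^{k/2}\sqrt{k!\,\|f_k(\cdot,t,x)\|_{\HH^{\otimes k}}^{2}}\,.
\]
For $u_0\equiv 1$ one has $\cf u_0=2\pi\delta_0$, and $|\zeta|^{\frac12-H}$ vanishes at $\zeta=0$ (since $H<\tfrac12$); therefore the chaos bound \eqref{eq:bnd-H-norm-fn} reduces to $\sqrt{k!\|f_k\|^2}\le C_{0}(C_{1}t^H\kappa^{H-1})^{k/2}/\Gamma(kH+1)^{1/2}$. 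Stirling's asymptotic shows that the general term of the series is maximized at $k_{*}\sim ((n-1)t^H\kappa^{H-1})^{1/H}$ and that the whole sum is bounded by $C\exp(c(n-1)^{1/H}t\kappa^{1-1/H})$. Raising this to the $n$-th power and absorbing polynomial-in-$n$ factors into the exponential yields the announced upper bound $c_3 \exp(c_2 n^{1+1/H}\kappa^{1-1/H}t)$.

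For the lower bound, use Theorem \ref{THM moment} with $u_0\equiv 1$; taking real parts of $V^{j,k}_{t,x}$,
\[
\be[u^n(t,x)]=\be_B\lc \exp\lp c_{1,H}\sum_{1\leq j\ne k\leq n}\iot\int_\R |\xi|^{1-2H}\cos\bigl(\xi(B^j_{\kappa r}-B^k_{\kappa r})\bigr)d\xi\,dr\rp\rc,
\]
and introduce the Brownian tube event $A_R=\{\sup_{0\le r\le t}\max_{1\le j\le n}|B^j_{\kappa r}|\le R\}$. On $A_R$, $|B^j_{\kappa r}-B^k_{\kappa r}|\le 2R$, so restricting the inner Fourier integral to $|\xi|\le 1/(2R)$ (where $\cos(\xi(B^j-B^k))\ge \cos(1)>0$) produces the deterministic bound $V^{j,k}_{t,x}\ge c_H\, t R^{2H-2}$, and summing over the $n(n-1)$ ordered pairs gives $\sum_{j\ne k}V^{j,k}_{t,x}\ge c_H n(n-1)\, t R^{2H-2}$ on $A_R$. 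Combining this with the classical Brownian tube estimate $\bp(A_R)\ge \exp(-c'n\kappa t/R^2)$ (obtained as a product of $n$ independent tube probabilities) yields
\[
\be[u^n(t,x)]\ge \exp\bigl(c\, n^2\, t R^{2H-2}-c'\, n\kappa t/R^2\bigr).
\]
The first-order condition in $R$ gives $R_{*}=C(\kappa/n)^{1/(2H)}$; substituting back makes both terms of order $n^{1+1/H}\kappa^{1-1/H}t$ with the combined coefficient strictly positive, which produces the lower bound.

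The main obstacle I anticipate lies on the upper-bound side: one must verify that the chaos bound \eqref{eq:bnd-H-norm-fn} carries over to the constant initial datum $u_0\equiv 1$ (which satisfies condition \eqref{cond:fu0} only via the Dirac distributional interpretation $\cf u_0=2\pi\delta_0$), and then carry out the Stirling-based summation carefully so that the factor $(n-1)^{k/2}$ from hypercontractivity and the factor $1/\Gamma(kH+1)^{1/2}$ from the Gaussian chaos structure combine to produce precisely the sharp exponent $n^{1+1/H}$ rather than, say, $n^{2/H}$. For the lower bound, the decisive ingredient is the correct $n$-dependence of the tube probability; any sub-optimality in its exponent would destroy the matching $n^{1+1/H}$ scaling in the optimization over $R$.
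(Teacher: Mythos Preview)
Your upper-bound argument is essentially the paper's: chaos expansion, hypercontractivity, the bound \eqref{eq:bnd-H-norm-fn} specialized to $u_0\equiv 1$, and then a Mittag-Leffler/Stirling summation. No issue there.

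The lower bound, however, has a real gap. You write
\[
V^{j,k}_{t,x}=\int_0^t\!\int_\R |\xi|^{1-2H}\cos\bigl(\xi(B^j_{\kappa r}-B^k_{\kappa r})\bigr)\,d\xi\,dr
\]
and then claim that on $A_R$ one can ``restrict the inner Fourier integral to $|\xi|\le 1/(2R)$'' to get $V^{j,k}_{t,x}\ge c_H\,tR^{2H-2}$. This step fails: the discarded tail $\int_{|\xi|>1/(2R)}|\xi|^{1-2H}\cos(\xi\,\cdot\,)\,d\xi$ is not nonnegative (cosine oscillates) and is not even an absolutely convergent integral, so there is no pathwise inequality of the form $V^{j,k}_{t,x}\ge \int_{|\xi|\le 1/(2R)}(\cdots)$. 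Indeed $V^{j,k}_{t,x}$ is only defined as the $L^2$-limit of $V^{\ep,j,k}_{t,x}$; without the Gaussian cutoff the $\xi$-integral has no pointwise meaning, and the distributional Fourier inverse of $|\xi|^{1-2H}$ is a genuine (non-function) distribution of order $2H-2<-1$.

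The paper repairs exactly this point by working with the regularized $V^{\ep,j,k}_{t,x}$ and using the identity
\[
\sum_{j\ne k}e^{i\xi(B^j-B^k)}=\Bigl|\sum_{j=1}^n e^{-iB^j\xi}\Bigr|^2-n,
\]
which splits the exponent into a \emph{nonnegative} integrand (for which truncation to $|\xi|\le\eta$ is a legitimate lower bound) and a negative term $-n\,t\int_\R e^{-\ep|\xi|^2}|\xi|^{1-2H}d\xi$ that is \emph{finite} thanks to the regularization. One then optimizes in $\eta$ and $\ep$, and passes back to $u$ via the inequality $\be[u_\ep^n(t,x)]\le\be[u^n(t,x)]$, proved by expanding the exponential and observing that each $\be_B[e^{iB^\alpha(\xi)}]$ is positive. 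Your tube event and optimization are the right ideas, but they must be run on $u_\ep$, with the $|\sum|^2-n$ decomposition replacing the pairwise truncation, and then transferred to $u$ by this comparison.
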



\begin{proof}[Proof of Theorem \ref{thm:intermittency-estimates}]
We divide this proof into upper and lower bound estimates.

\noindent \textit{Step 1: Upper bound.} Recall from equation
\eqref{eq:chaos-expansion-u(tx)} that for $(t,x)\in\R_{+}\times\R$,
$u(t,x)$ can be written as:
$u(t,x)=\sum_{m=0}^{\infty}I_m(f_m(\cdot,t,x))$. Moreover, as a
consequence of the hypercontractivity property on a fixed chaos we
have (see \cite[p. 62]{Nua})
\begin{equation*}
\|I_m(f_m(\cdot,t,x))\|_{L^{n}(\oom)}\leq
(n-1)^{\frac{m}{2}}\|I_m(f_m(\cdot,t,x))\|_{L^{2}(\oom)} \,,
\end{equation*}
and substituting  the above right hand side by the bound \eqref{eq:bnd-H-norm-fn}, we end up with
\begin{eqnarray*}
\|I_m(f_m(\cdot,t,x))\|_{L^{n}(\oom)}
\leq
n^{\frac{m}{2}}\|I_m(f_m(\cdot,t,x))\|_{L^{2}(\oom)}
\leq
\frac{c^{\frac{n}{2}}n^{\frac{m}{2}}t^{\frac{mH}{2}}\kappa^{\frac{Hm-m}{2}}}
{ \Gamma(mH/2+1) } \,.
\end{eqnarray*}
Therefore from by the asymptotic bound of Mittag-Leffler 
function     $\sum_{n\ge 0}x^{n}/\Gamma(\al n+1) \le c_1 \exp(c_2 x^{1/a})$ (see  
\cite{kilbas}, Formula (1.8.10)),  we get:
\begin{eqnarray*}
\|u(t,x)\|_{L^{n}(\oom)}
\leq
\sum_{m=0}^{\infty} \|J_m(t,x)\|_{L^{n}(\oom)}
\leq
\sum_{m=0}^{\infty}\frac{c^{\frac{m}{2}}n^{\frac{m}{2}}t^{\frac{mH}{2}}\kappa^{\frac{Hm-m}{2}}}{\big(\Gamma(m
H+1)\big)^{\frac{1}{2}}}\leq c_{1}\exp {\big(c_{2} t n^{\frac{1}{H}} \kappa^{\frac{H-1}{H}}\big)}\,,
\end{eqnarray*}
from which the upper bound in our theorem is easily deduced.

\noindent
\textit{Step 2: Lower bound for $u_{\ep}$.}
For the lower bound, we start from the moment formula \eref{appro moment} for the approximate solution, and write 
\begin{multline*}
\be \lc  u^n_{\varepsilon}(t,x)\rc \\
=
\be_{B} \lc \exp \left(c_{1,H}\left[ \int_0^t \int_{\mathbb{R}} e^{-\varepsilon |\xi|^2}
\left| \sum_{j=1}^n e^{-i B_{\kappa r}^j \xi}\right|^2 |\xi|^{1-2H} d\xi dr
-nt \int_{\mathbb{R}} e^{-\varepsilon |\xi|^2} |\xi|^{1-2H} d\xi\right] \right)\rc.
\end{multline*}
In order to estimate the expression above, notice first that the
obvious change of  variable $\la= \ep^{1/2}\xi$ yields
$\int_{\mathbb{R}} e^{-\varepsilon |\xi|^2} |\xi|^{1-2H} d\xi=C
\ep^{-(1-H)}$ for some constant $C$.  Now for an additional arbitrary parameter $\eta>0$,
consider the set
\begin{equation*}
A_\eta=\left\{\om; \, \sup_{1\leq j\leq n}\sup_{0\leq r \leq t}|B_{\kappa r}^{j}(\om)|\leq
\frac{\pi}{3\eta}\right\}.
\end{equation*}
Observe that classical small balls inequalities for a Brownian motion (see (1.3) in \cite{LS}) yield $\bp(A_{\eta})\geq c_{1} e^{-c_{2}  \eta^2 n \kappa t}$ for a large enough $\eta$. In addition, if we assume that $A_{\eta}$ is realized and $|\xi|\le\eta$, some elementary trigonometric identities show that the following deterministic bound hold true: $| \sum_{j=1}^n e^{-i B_{\kappa r}^j \xi}| \ge \frac{n}{2}$.
Gathering those considerations, we thus get
\begin{align*}
\be \lc  u^n_{\varepsilon}(t,x)\rc
&\geq
\exp \left( c_1 n^2 \int_0^t \int_0^{\eta} e^{-\varepsilon |\xi|^2} |\xi|^{1-2H} d\xi dr - c_2 nt \varepsilon^{H-1} \right)
\bp\lp A_\eta \rp \\
&\geq C
\exp \left( c_1 n^2 t  \varepsilon^{-(1-H)} \int_0^{\ep^{1/2}\eta} e^{- |\xi|^2} |\xi|^{1-2H} d\xi  - c_2 nt \varepsilon^{-(1-H)} - c_{3} n \kappa t \eta^{2} \right).
\end{align*}
We now choose the parameter $\eta$ such that $\kappa \eta^2=\varepsilon^{-(1-H)}$, which means in particular that $\eta \to \infty$ as $\varepsilon \to 0$. It is then easily seen that $\int_0^{\ep^{1/2}\eta} e^{- |\xi|^2} |\xi|^{1-2H} d\xi$ is of order $\ep^{H(1-H)}$ in this regime, and some elementary algebraic manipulations entail
\begin{equation*}
\be \lc  u^n_{\varepsilon}(t,x)\rc
\geq C
\exp \left( c_1 n^2 t \kappa^{H-1}\varepsilon^{-(1-H)^2} -c_2 nt\varepsilon^{-(1-H)}\right)
\geq C \exp \left(c_{3} t \kappa^{1-\frac{1}{H}}n^{1+\frac{1}{H}}\right),
\end{equation*}
where the last inequality is obtained by choosing $\varepsilon^{-(1-H)}=c \, \kappa ^{\frac{H-1}{H}}n^{\frac{1}{H}}$ in order to optimize the second expression. We have thus reached the desired lower bound in \eqref{eq:intermittency-bounds} for the approximation $u^{\ep}$ in the regime $\varepsilon=c \, \kappa ^{\frac{1}{H}}n^{-\frac{1}{H(1-H)}}$.

\noindent
\textit{Step 3: Lower bound for $u$.}
To complete the proof, we need to show that for all sufficiently small $\varepsilon$, $\be \lc  u^n_{\varepsilon}(t,x)\rc\leq \be[u^n(t,x)]$. We thus start from equation \eref{appro moment} and use the series expansion of the exponential function as in \eqref{eq:moments-F-t-epsilon}. We get
\begin{equation}\label{eq:expansion-moment-u-epsilon}
\be \lc  u^n_{\varepsilon}(t,x)\rc=
\sum_{m=0}^{\infty} \frac{  c_{1,H}^m}{m!}  \,
\be _{B} \!\lc \left( \sum_{1\leq j \neq k \leq n} V_{t,x}^{\ep,j,k} \right)^m \rc,
\end{equation}
where we recall that $V_{t,x}^{\ep,j,k}$ is defined by \eqref{eq:def-V-tx-epsilon}. Furthermore, expanding the $m$th power above, we have
\begin{equation*}
\be_{B} \!\lc \left( \sum_{1\leq j \neq k \leq n} V_{t,x}^{\ep,j,k} \right)^m \rc
=
\sum_{\al\in K_{n,m}}  \int_{[0,t]^m} \int_{\mathbb{R}^m}
e^{-\varepsilon \sum_{l=1}^m |\xi_l|^2} \be_{B} \lc e^{i B^{\al}(\xi)} \rc
\prod_{l=1}^m |\xi_l|^{1-2H} \, d\xi dr\,,
\end{equation*}
where $K_{n,m}$ is a set of multi-indices defined by
\begin{equation*}
K_{n,m}=
\lcl
\al=(j_{1},\ldots,j_{m},k_{1},\ldots,k_{m}) \in \{1,\ldots,n\}^{2m} ; \,
j_{l}<k_{l} \text{ for all } l=1,\ldots,n
\rcl,
\end{equation*}
and $B^{\al}(\xi)$ is a shorthand for the linear combination $\sum_{l=1}^m \xi_{l}(B_{\kappa r_{l}}^{j_{l}}-B_{\kappa r_{l}}^{k_{l}})$. The important point here is that $E _{B} e^{iB^{\al}(\xi)}$ is positive for any $\al\in K_{n,m}$. We thus get the following inequality, valid for all $m\ge 1$
\begin{eqnarray*}
\be _{B} \!\lc \left( \sum_{1\leq j \neq k \leq n} V_{t,x}^{\ep,j,k} \right)^m \rc
&\le&
\sum_{\al\in K_{n,m}}  \int_{[0,t]^m} \int_{\mathbb{R}^m}
\be _{B} \lc e^{i B^{\al}(\xi)} \rc
\prod_{l=1}^m |\xi_l|^{1-2H} \, d\xi dr \\
&=&
\be _{B} \!\lc \left( \sum_{1\leq j \neq k \leq n} V_{t,x}^{j,k} \right)^m \rc,
\end{eqnarray*}
where $V_{t,x}^{j,k}$ is defined by \eqref{eq:def-Vtx-jk}. Plugging this inequality back into \eqref{eq:expansion-moment-u-epsilon} and recalling expression \eqref{moment} for $\be [u^n(t,x)]$, we easily deduce that $\be [u^n_{\ep}(t,x)] \le \be[u^n(t,x)]$, which finishes  the proof.
\end{proof}

\begin{minipage}{1.0\textwidth}
\vskip 1cm
Yaozhong Hu, Jingyu Huang, Khoa L\^e and David Nualart: Department of Mathematics, University of Kansas, 405 Snow Hall, Lawrence, Kansas, 66044, USA. 

{\it E-mail address:} yhu@ku.edu, jhuang@math.utah.edu, khoa.le@ucalgary.ca,  nualart@ku.edu
\vskip 0.5cm
Samy Tindel: Department of Mathematics, Purdue University, West Lafayette, IN 47907, USA.

{\it E-mail address:} stindel@purdue.edu
\end{minipage}

\end{document}